\definecolor{darkgreen}{rgb}{0,0.50,0} 
\definecolor{darkred}{rgb}{0.55,0,0}
\definecolor{darkblue}{rgb}{0,0,0.6} 
\title{Cutting and pasting pairs of manifolds with tangential structures}
\author{R.A. Vlierhuis}
\address{Vrije Universiteit Amsterdam}
\email{r.a.vlierhuis@student.vu.nl}
\date{June 2025}
\newtheorem{thmx}{Theorem}
\newtheorem{thm}{Theorem}[section]
\newtheorem*{thm*}{Theorem}
\newtheorem{lem}[thm]{Lemma}
\newtheorem{prop}[thm]{Proposition}
\theoremstyle{definition}
\newtheorem{ex}[thm]{Example}
\newtheorem{defi}[thm]{Definition}
\theoremstyle{remark}
\newtheorem*{rmk}{Remark}
\newcommand{\N}{\mathbb{N}}
\newcommand{\R}{\mathbb{R}}
\newcommand{\Z}{\mathbb{Z}}
\newcommand{\de}{\partial}
\newcommand{\im}{\operatorname{im}}
\renewcommand{\int}{\operatorname{int}}
\newcommand{\SK}{\operatorname{SK}}
\newcommand{\RP}{\mathbb{R}P}
\newcommand{\bSK}{\overline{\SK}}
\newcommand{\id}{\operatorname{id}}
\begin{document}

\begin{abstract}
    This paper studies cutting and pasting groups (SK-groups) of pairs of manifolds. By a pair of manifolds we mean a manifold with a submanifold of strictly smaller dimension. Existing results in the unoriented category by Komiya are generalized to manifolds with certain tangential structures. In this way multiple new splitting results for SK of pairs are obtained, in particular for SK of pairs of manifolds with a map to a reference space. We also prove that SK of manifolds with a map into $X\times Y$ with $Y$ simply connected is the same as SK of manifolds with map into $X$. This generalizes the result of Neumann that SK of manifolds with a map into a simply connected space is trivial.
\end{abstract}
\maketitle
\section{Introduction}
$\SK$-groups were introduced by Karras, Kreck, Neumann and Ossa in  their 1973 book \cite{skbook}. Given a manifold $M$, choosing a codimension one splitting submanifold with trivial normal bundle yields a splitting $M = M_1 \cup M_2$, where $M_1$ and $M_2$ are manifolds with boundary. If one now takes a diffeomorphism $\varphi: \de M_1 \cong \de M_2$, the pieces can be glued back along $\varphi$ to get a manifold potentially different from $M$. The $\SK$-groups measure how many different classes of manifolds there are under this cutting and pasting relation. Recently, there has been renewed interest in these SK-groups and their generalizations, see for example \cite{HOEKZEMA2022108105} and the preprints \cite{hoekzema2025} and \cite{merling2025}.

The present paper aims to extend cutting and pasting of pairs. A pair of manifolds is a manifold with a submanifold of strictly smaller dimension. This generalization of SK was considered first by Komiya \cite{komiya1986}, who proved in the unoriented case that cutting and pasting of pairs does not introduce any new obstructions. Informally, ``cutting and pasting a pair is the same as cutting and pasting the big manifold and the small manifold individually''. We shall extend his results from unoriented manifolds to  manifolds endowed with a sufficiently nice tangential $B$-structure. Our statement is the following.
\begin{thmx}[\cref{main-thm}]
Let $B\to BO_k$ be a strongly multiplicative structure, with $k\geq m > n$. There is a split short exact sequence
    \[0\to \SK^B_m \xrightarrow{i} \SK^B_{m,n} \xrightarrow{j} \SK^{B}_n(B_{m-n}) \to 0,\]
    with $i([M]) = [M,\emptyset]$ and $j([M,N]) = [N,\nu]$ where $\nu: N \to B_{m-n}$ is the $B$-structure for the normal bundle of $N$ in $M$.
\end{thmx}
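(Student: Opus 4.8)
The plan is to build a retraction of $i$ and a section of $j$ and to identify $\ker j$ by an explicit cut-and-paste. For a vector bundle $E$ write $D(E)$, $S(E)$ for its disc and sphere bundles, and $DD(E):=D(E)\cup_{S(E)}D(E)\cong S(E\oplus\R)$ for the double of $D(E)$, i.e.\ the sphere bundle of $E\oplus\R$. If $N$ is a closed $B_n$-manifold and $\nu\colon N\to B_{m-n}$ classifies an $(m-n)$-plane bundle $E$ over $N$ with its $B_{m-n}$-structure, then strong multiplicativity combines the $B_n$-structure of $TN$ with $\nu$ into a $B_m$-structure on $TN\oplus E$, and hence — since $T(DD(E))$ is stably $p^*(TN\oplus E)$ for the projection $p\colon DD(E)\to N$, and $k\geq m$ leaves room to stabilise — into a $B$-structure on $DD(E)$; viewing $N$ as the zero section of one of the two copies of $D(E)$ gives a $B$-structured pair. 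Put $r\colon\SK^B_{m,n}\to\SK^B_m$, $r([M,N])=[M]$, and $s\colon\SK^B_n(B_{m-n})\to\SK^B_{m,n}$, $s([N,\nu])=[DD(E),N]$.

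First the easy points. The map $i$ is clearly a homomorphism with $j\circ i=0$. The map $r$ is well defined because cutting a pair $(M,N)$ along a hypersurface of $M$ transverse to $N$ and regluing restricts, after forgetting $N$, to a cut-and-paste of $M$; hence $r\circ i=\id$, so $i$ is injective and any proof that the sequence is exact also splits it. That $s$ is a well-defined homomorphism follows by lifting a cut-and-paste of $(N,\nu)$ along a two-sided hypersurface $V\subset N$ to the cut-and-paste of $(DD(E),N)$ along $p^{-1}(V)$, regluing by the bundle automorphism covering the one on $N$, which preserves the zero section. Finally $j\circ s=\id$, since the normal bundle of the zero section in $DD(E)$ is $E$ and its induced $B_{m-n}$-structure is $\nu$ by construction; in particular $j$ is surjective.

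It remains to see that $j$ is well defined and that $\ker j\subseteq\im i$. For the first point one checks that a cut-and-paste of a pair induces a cut-and-paste of the normal data: after isotoping the cutting hypersurface $\Sigma$ to be transverse to $N$, the trace $\Sigma\cap N$ is a two-sided hypersurface in $N$, cutting $M$ along $\Sigma$ cuts $N$ along $\Sigma\cap N$, and a regluing diffeomorphism of the pieces of $M$ that sends submanifold-boundary to submanifold-boundary restricts to one of $N$ and, via its derivative, to an identification of the pieces of the normal bundle compatibly with their $B_{m-n}$-structures. The essential use of strong multiplicativity is here: it is what makes the splitting of the $B$-structure of $M$ near $N$ into the $B_n$-structure of $TN$ and a $B_{m-n}$-structure on the normal bundle natural enough to be compatible across cuts. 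This step is the heart of the generalisation of Komiya's unoriented argument, and I expect it to be the main obstacle.

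For $\ker j\subseteq\im i$ I would prove the sharper relation $[M,N]=[DD(E),N]+i\big([M]-[DD(E)]\big)$ in $\SK^B_{m,n}$, with $E$ the normal bundle of $N$ in $M$. Choose a tubular neighbourhood $D(E)\subset M$ with exterior $M_0=M\setminus\int D(E)$, so that $M=D(E)\cup_{S(E)}M_0$; by uniqueness of tubular neighbourhoods this $D(E)$, and likewise the two copies of $D(E)$ inside $DD(E)$, carry up to homotopy the standard $B$-structure built from $(N,\nu)$. In $(M,N)\sqcup(DD(E),\emptyset)$ cut along the two-sided hypersurface $S(E)\sqcup S(E)$ formed by the tubular-neighbourhood boundary of $M$ and the equator of $DD(E)$; this hypersurface is disjoint from $N$, and cutting produces the four pieces $D(E)$ (containing $N$), $M_0$, $D(E)$, $D(E)$. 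Regluing so that the copy of $D(E)$ containing $N$ is joined to one of the plain copies, and $M_0$ to the other, yields $(DD(E),N)\sqcup(M,\emptyset)$, whence $[M,N]+[DD(E),\emptyset]=[DD(E),N]+[M,\emptyset]$. Thus $[M,N]-s(j([M,N]))=i\big([M]-[DD(E)]\big)\in\im i$ for every pair, so by linearity $x-s(j(x))\in\im i$ for all $x\in\SK^B_{m,n}$; in particular $\ker j\subseteq\im i$. Together with the obvious inclusion $\im i\subseteq\ker j$ this proves exactness, and the sequence splits via $s$ (or $r$), proving the theorem.
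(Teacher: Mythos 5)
Your proposal is correct and follows essentially the same route as the paper: the section is the sphere bundle $S(E\oplus\underline{\R})$ (your $DD(E)$) with $N$ embedded as a section, the retraction is $[M,N]\mapsto[M]$, and exactness comes from cutting along the tubular-neighbourhood boundary of $N$ and the equator of the sphere bundle and regluing. The only (cosmetic) difference is that you package this as the single identity $[M,N]+[DD(E),\emptyset]=[DD(E),N]+[M,\emptyset]$ applied to every pair, where the paper introduces auxiliary doubles $K_i$, $K_i'$ and manipulates the two summands of a kernel element separately.
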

This result enables many new splitting results. For example, we present one for unoriented SK with a map to a reference space $X$ (\cref{komiya-unori}), and an analogous result in the oriented case (\cref{komiya-ori}). These results may be phrased as saying that ``cutting and pasting a pair of maps is the same as cutting and pasting the maps individually''. While proving the statement in the oriented case we will also show the following, which generalizes Theorem 1 from \cite{neumann1975}.
\begin{thmx}[\cref{forget-Y}]
    Let $Y$ be a simply connected pointed space, and suppose $\pi_1(X)$ is finitely presented. Then the projection $p_*:\SK_n(X\times Y)\to\SK_n(X)$ is an isomorphism for all $n$.
\end{thmx}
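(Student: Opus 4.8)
The plan is to exploit the evident section of $p$ and then to relativize Neumann's argument over $X$. The projection $p\colon X\times Y\to X$ has the section $s\colon X\to X\times Y$, $x\mapsto(x,y_0)$, with $y_0$ the basepoint of $Y$, so that $p_*\circ s_*=\id_{\SK_n(X)}$. Hence $p_*$ is a split surjection, and it suffices to prove it is injective, i.e.\ that $s_*\circ p_*=\id_{\SK_n(X\times Y)}$. Now $\SK_n(X\times Y)$ is generated by the classes $[M,g]$ of closed $n$-manifolds equipped with a reference map $g=(g_1,g_2)\colon M\to X\times Y$, and $s_*p_*[M,(g_1,g_2)]=[M,(g_1,c_{y_0})]$, where $c_{y_0}$ is the constant map at $y_0$; so the theorem reduces to the single equality
\[
 [M,(g_1,g_2)]=[M,(g_1,c_{y_0})]\quad\text{in }\SK_n(X\times Y)
\]
for all such $M$ and $g$. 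This is precisely Neumann's theorem that $\SK$ of a simply connected space is trivial, now taken relative to $X$, and it carries the full content of the theorem; in particular it is strictly more than homotopy invariance of $\SK_n(-)$, since $g_2$ need not be null-homotopic.

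To prove the displayed equality I would run Neumann's construction from \cite[Theorem 1]{neumann1975} (see also \cite{skbook}) fiberwise over $X$. Replacing $Y$ by a CW model, homotopy invariance lets us take $g_2\colon M\to Y$ cellular; compactness of $M$ puts $g_2(M)$ in a finite subcomplex $K_0$, and since $\pi_1(K_0)$ is finitely generated and dies in $\pi_1(Y)=1$, we may attach finitely many $2$-cells to $K_0$ along generators of $\pi_1(K_0)$ --- the inclusion $K_0\hookrightarrow Y$ extends over them, as each such loop bounds in $Y$ --- and thereby assume $g_2$ factors through a finite, simply connected complex $K$. One then inducts on the cells of $K$: deleting the top cell of $K$ changes $g_2$ in a way realized by a cut-and-paste on $M$ supported over that cell --- concretely, a regluing of $M$ along the boundary of a tubular neighborhood of the transverse preimage of an interior point of the cell --- performed so as not to disturb the homotopy class of $g_1$. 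When $K$ has shrunk to a point, $g_2$ has been made constant, which is the claim. The hypothesis that $\pi_1(X)$ is finitely presented enters exactly at this inductive step: it is what lets Neumann's construction be carried out over $X$ with only finitely much auxiliary data, so that the induction terminates. Conceptually, $\pi_1(X\times Y)\cong\pi_1(X)$ because $Y$ is simply connected, so the $Y$-factor contributes only ``simply connected directions'', which is why it can be absorbed.

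The main obstacle is compatibility: Neumann's cut-and-paste moves were designed for an unstructured target, whereas here each move must respect the tangential structure defining $\SK_n$ and must change the reference map only in the $Y$-coordinate, leaving the homotopy class of $g_1$ alone. I expect this to force one to present each move as a regluing of $M$ along a two-sided hypersurface $N\subseteq M$ over which $g_1$ is, up to homotopy, pulled back from a finite low-dimensional model of $X$ provided by a finite presentation of $\pi_1(X)$, together with enough collar bookkeeping that the reglued reference map still restricts correctly to $g_1$; keeping this data finite is once more where finite presentability of $\pi_1(X)$ is used. Granting the displayed equality, $s_*\circ p_*=\id$ follows, and with $p_*\circ s_*=\id$ this exhibits $p_*$ and $s_*$ as mutually inverse isomorphisms $\SK_n(X\times Y)\cong\SK_n(X)$ for every $n$. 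Taking $X$ to be a point recovers \cite[Theorem 1]{neumann1975}.
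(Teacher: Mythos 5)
Your opening reduction is fine and agrees with the paper's strategy: $p_*$ is split surjective via the basepoint section $s\colon X\to X\times Y$, so everything comes down to the single identity $[M,(g_1,g_2)]=[M,(g_1,c_{y_0})]$ in $\SK_n(X\times Y)$. The gap is in your proof of that identity. Cut-and-paste moves do not act ``locally over a cell of the target'': regluing $M$ along the boundary of a tubular neighbourhood of a point preimage is just another SK-move, and by Theorems 1.1--1.2 of Karras--Kreck--Neumann--Ossa (\cref{skbook1.1}, \cref{skbook1.2}) the precise content of the identity is that the difference of the two bordism classes in $\Omega_n(X\times Y)$ lies in the subgroup $F_n(X\times Y)$ of classes represented by manifolds fibering over $S^1$ (equivalently, admitting an open book decomposition). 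There is no elementary ``delete the top cell of $K$'' move that produces such a representative; indeed Neumann's own proof of the case $X=\mathrm{pt}$ is not a cell induction but an application of Winkelnkemper's open book decomposition theorem, a hard result valid only in high dimensions and with no elementary analogue in dimension $4$. If a cell-by-cell regluing argument of the kind you sketch existed, it would give an elementary proof of Neumann's theorem, which is not available. You assert the key step rather than prove it.

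What the paper supplies at exactly this point is genuinely dimension-dependent. For $n\geq 5$ it shows that Ranicki's asymmetric signature --- the complete obstruction to extending an open book decomposition from the boundary --- lives in $\operatorname{LAsy}^{n+2}_h(\Z[\pi_1(X\times Y)])=\operatorname{LAsy}^{n+2}_h(\Z[\pi_1(X)])$ and is computed by pulling back coefficients along $\pi_1$, hence cannot see $g_2$ when $Y$ is simply connected; for $n=2$ it quotes Neumann's computation of $\SK_2$ purely in terms of $\pi_1$; and for $n=4$, where no open book theorem is available, it uses Kreck's modified surgery and the classification of $4$-manifolds up to stable diffeomorphism via normal $1$-types. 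That last case is also where finite presentability of $\pi_1(X)$ actually enters (to surger the reference map into a $\pi_1$-isomorphism after finitely many connected sums with $S^1\times S^3$ and finitely many surgeries below the middle dimension), not in a cell induction as you suggest. To repair your write-up you would need to replace the inductive regluing step by these three separate arguments, or by some other mechanism producing open book representatives; as it stands the proposal does not constitute a proof.
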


\subsection*{Conventions}
In this text, a manifold is smooth and closed, not necessarily oriented. The reference space $X$ is assumed to be pointed and path connected. A star $*$ will denote the appropriate constant map.

\subsection*{Acknowledgments}
This work is a continuation of the author's BSc thesis, written under the supervision of Renee Hoekzema. I would like to thank her for inspiring me to write this paper and for making the project possible.

\section{Tangential structures}
We start by defining what we mean by a tangential structure on a bundle.

\begin{defi}[$B$-structure]
    Let $B$ be a pointed space, $B\to BO_n$ a pointed fibration. If $\xi_n:X\to BO_n$ is the classifying map for some vector/principal/fiber bundle over $X$, a $B$-structure on $\xi_n$ is a lift
    \[\begin{tikzcd}
                                      & B \arrow[d] \\
X \arrow[ru, dashed] \arrow[r, "\xi_n"] & BO_n       .
\end{tikzcd}\]
We call the data of $B$ together with the fibration the structure $B$ (when the fibration is clear from the context).
\end{defi}
By pullback, we can use the fibration $B\to BO_n$ to define what a $B$-structure is for any dimension $k\leq n$.
\begin{defi}
    If $B$ is as above, consider for all $k\leq n$ the fibrations $B_k\to BO_k$ defined by the pullbacks
    \[\begin{tikzcd}
B_k \arrow[r] \arrow[d] \arrow[dr, phantom, "\lrcorner", very near start] & B \arrow[d] \\
BO_k \arrow[r]          & BO_n    .   
\end{tikzcd}\]
If $\xi_k:X\to BO_k$ is the classifying map for some vector/principal/fiber bundle over $X$, a $B$-structure on $\xi_k$ is a lift
    \[\begin{tikzcd}
                                      & B_k \arrow[d] \\
X \arrow[ru, dashed] \arrow[r, "\xi_k"] & BO_k   .    
\end{tikzcd}\]
\end{defi}

We call a manifold $M$ with a $B$-structure on its tangent bundle a $B$-manifold. We need to be able to say when two $B$-manifolds are the same, generalizing the notion of an orientation-preserving diffeomorphism. This will be done as follows.

\begin{defi}[$B$-isomorphism]
    Let $E\to X$ and $E'\to X$ be two $B$-bundles. A $B$-isomorphism between $E$ and $E'$ is a homotopy between the classifying maps and a homotopy between the $B$-structures such that the triangle
    \[\begin{tikzcd}
                               & B_k \arrow[d] \\
X\times I \arrow[ru] \arrow[r] & BO_k         
\end{tikzcd}\]
    commutes. Note that if $E=E'$, this is just a vertical homotopy of the $B$-structures. If two $B$-structures are vertically homotopic, we say they are \textit{equivalent} $B$-structures.
\end{defi}

\begin{defi}[$B$-diffeomorphism]
    Let $M_1$ and $M_2$ be $B$-manifolds. A diffeomorphism $f:M_1\to M_2$ gives rise to an isomorphism of tangent bundles $df:TM_1\to TM_2$, which can be seen as a homotopy filling the triangle
    \[\begin{tikzcd}
M_1 \arrow[r, "f"] \arrow[rd] & M_2 \arrow[d] \\
                              & BO_k   .      
\end{tikzcd}\]
    A $B$-diffeomorphism from $M_1$ to $M_2$ is a diffeomorphism $f:M_1\to M_2$ together with a homotopy $H$ filling the triangle
    \[\begin{tikzcd}
M_1 \arrow[r, "f"] \arrow[rd] & M_2 \arrow[d] \\
                              & B_k    ,      
\end{tikzcd}\]
    so that $df$ and $H$ form a $B$-isomorphism of bundles over $M_1$.
\end{defi}

It is easy to check that the inverse of a $B$-isomorphism is again a $B$-isomorphism and that the inverse of a $B$-diffeomorphism is again a $B$-diffeomorphism.

Our results about cutting and pasting of pairs will apply to structures that satisfy a two-out-of-three theorem. For example, if $E\to M$ is a vector bundle, then two of the following being true implies the third: 
\begin{enumerate}
    \item $M$ is orientable
    \item $E$ is orientable (as a manifold)
    \item The bundle $E\to M$ is orientable.
\end{enumerate}
We formalize this for a general tangential structure as follows.


\begin{defi}[Strongly multiplicative]
   A structure $B\to BO_n$ is called strongly multiplicative if and only if for every $r,s\in\N$ with $r+s\leq n$ there exists a map $\psi_{r,s}:B_r\times B_s\to B_{r+s}$ such that
   \[\begin{tikzcd}
B_r\times B_s \arrow[r, "{\id\times\psi_{r,s}}"] \arrow[d] \arrow[dr, phantom, "\lrcorner", very near start] & B_r\times B_{r+s} \arrow[d] \\
BO_r\times BO_s \arrow[r, "{\id\times\phi_{r,s}}"]         & BO_r\times BO_{r+s}        
\end{tikzcd}\]
is a pullback square, where the vertical maps are the obvious ones and $\phi_{r,s}:BO_r\times BO_s\to BO_{r+s}$ is the natural embedding.
\end{defi}

This condition means that we choose a way to combine $B$-structures on $\xi^r$ and $\eta^s$ to a $B$-structure on $\xi^r\oplus\eta^s$, ensuring that $B$-structures on $\eta$ and $\xi\oplus\eta$ give rise to a \textit{unique} choice of $B$-structure on $\xi$.

\begin{rmk}
    The terminology `strongly multiplicative' is meant to reflect that we are demanding this two-out-of-three rule, and not just the weaker statement that a product of two $B$-bundles again has a $B$-structure. 
\end{rmk}

\begin{ex}
\label{example-structures}
    \begin{enumerate}
        \item The $k$-connected covers of $BO_n$ define strongly multiplicative structures. Indeed, one can check this by using the long exact sequence for the homotopy groups in a pullback square of fibrations. In particular, orientations and spin structures form strongly multiplicative structures, which can also be proven in a concrete manner.
        \item Another example is given by the $k$-orientability of Hoekzema \cite{Hoekzema_2018}. We call a manifold $k$-orientable if its Stiefel-Whitney classes up to dimension $2^k$ vanish. It is easy to show that the corresponding structure satisfies the strongly multiplicative property.
        \item If $B\to BO_n$ is any strongly multiplicative structure and $X$ is a space (we will assume it to be pointed and path connected), then another strongly multiplicative structure is given by the map $B\times X\to BO_n$ which first projects onto $B$ and then applies the initial fibration. Note that all pullbacks are of the form $B_m\times X\to BO_m$. That is, this structure represents a map from the $B$-manifold into $X$, with no further restrictions.
    \end{enumerate}
\end{ex}

\begin{prop}
    Let $\xi:E\to M$ be a vector bundle and $B$ a strongly multiplicative structure. If two of the following conditions hold, so does the third.
    \begin{enumerate}
        \item $M$ is a $B$-manifold
        \item $E$ is a $B$-manifold
        \item $\xi$ is a $B$-bundle.
    \end{enumerate}
\end{prop}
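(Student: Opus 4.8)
The plan is to push the whole question down to the total space $E$ along the projection $\pi\colon E\to M$, where the three bundles in the statement become a Whitney summand, its complement, and the entire tangent bundle, and then to feed them into the pullback square defining strong multiplicativity. The geometric ingredient is the canonical isomorphism $TE\cong\pi^*TM\oplus\pi^*\xi$: the differential of $\pi$ yields a short exact sequence $0\to\ker d\pi\to TE\to\pi^*TM\to 0$, the vertical subbundle $\ker d\pi$ is canonically $\pi^*\xi$, and the sequence splits (e.g.\ by choosing a linear connection), the resulting isomorphism class of $TE$ being independent of the choice. I will assume $\dim M+\operatorname{rank}\xi\leq n$, which is forced anyway by the requirement that condition~(2) even be meaningful.

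Next I record two elementary facts. First, $B$-structures pull back: post-composing a lift of a classifying map to some $B_j$ with an arbitrary map of spaces produces a lift of the composite, so a $B$-structure on a bundle $F\to M$ induces one on $\pi^*F\to E$. Second, the zero section $s\colon M\to E$ is a section of $\pi$, hence $s^*\pi^*F\cong F$, and restricting a $B$-structure on $\pi^*F$ along $s$ returns a $B$-structure on $F$. (One also uses, silently, that a $B$-structure transports along a bundle isomorphism, since the classifying map is only well defined up to homotopy.) Together these give the dictionary: condition~(1) is equivalent to ``$\pi^*TM$ admits a $B$-structure over $E$'', condition~(3) to ``$\pi^*\xi$ does'', and condition~(2) to ``$\pi^*TM\oplus\pi^*\xi$ does''. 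The proposition is thereby reduced to a statement about the two bundles $\pi^*TM$, $\pi^*\xi$ and their sum over the single space $E$.

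For that reduced statement I will feed classifying maps and their lifts into the square defining strong multiplicativity. The implication (1) and (3) $\Rightarrow$ (2) uses only that the square commutes: compose the two given lifts with $\psi_{r,s}$ for $(r,s)=(\dim M,\operatorname{rank}\xi)$ to get a $B$-structure on the sum. The implications (1) and (2) $\Rightarrow$ (3) and (2) and (3) $\Rightarrow$ (1) use that the square is a \emph{pullback}: from a lift of the classifying map of one Whitney summand together with a lift of the classifying map of the whole sum, the pullback property manufactures a lift of the classifying map of the complementary summand. One applies this with $(r,s)=(\dim M,\operatorname{rank}\xi)$ in the first case and $(r,s)=(\operatorname{rank}\xi,\dim M)$ in the second, the latter also using $\pi^*TM\oplus\pi^*\xi\cong\pi^*\xi\oplus\pi^*TM$ so that either ordering of the sum is available. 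Restricting the $B$-structure thus obtained on $\pi^*\xi$, resp.\ on $\pi^*TM$, along the zero section $s$ then completes those two cases.

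The part that needs care is the bookkeeping around the pullback step rather than any deep idea. One must read the $B_r$-factor in the square correctly, as recording a \emph{fixed} $B$-structure on the rank-$r$ bundle while one transfers between $B$-structures on $\eta^s$ and on $\xi^r\oplus\eta^s$; and one must interpret the pullback homotopy-invariantly, so that ``lift'' means ``vertical homotopy class of lift''. The latter is harmless here, since the maps to the various $BO_j$ are fibrations so the strict pullback is also a homotopy pullback, but it is exactly what makes the transferred structure well defined as an equivalence class of $B$-structures in the sense of the earlier definitions. Checking that everything is compatible with pulling back along $\pi$ and restricting along $s$ is then purely formal, both operations being nothing but pre-composition of classifying maps and of their lifts.
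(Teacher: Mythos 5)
Your proposal is correct and follows essentially the same route as the paper: both use a connection to split $TE$ as $\pi^*TM\oplus\pi^*\xi$, invoke the pullback property of strong multiplicativity to transfer a $B$-structure to the complementary summand (or the commuting square to combine two structures), and restrict along the zero section to return to bundles over $M$. Your version merely spells out the bookkeeping (the $(r,s)$ versus $(s,r)$ orderings and the homotopy-invariance of lifts) that the paper leaves implicit.
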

\begin{proof}
    Choose a connection on $\xi$. This is equivalent to a choice of splitting $TE = V\oplus H$ where $V = \ker d\xi$ and $H\cong \xi^*TM$. Additionally we can identify $V$ with $\xi^*E$. Suppose (1) and (2) hold. Then $TM$ is a $B$-bundle, hence so is $H$, thus by strong multiplicativity so is $V$. Pulling back $V$ along the zero section $M\to E$ we recover $\xi:E\to M$, which is thus a $B$-bundle.

    Now if (2) and (3) hold, $TE$ and $V$ are $B$-bundles, hence so is $H$ by strong multiplicativity. Again we pull back along the zero section, now to obtain $TM$. Finally, if (1) and (3) hold, then $V$ and $H$ are $B$-bundles, hence so is $TE$.
\end{proof}

\begin{prop}
    \label{sphere-bundles}
    Let $E$ be a vector bundle whose total space is a $B$-manifold, with $B$ strongly multiplicative. Then the total space of the sphere bundle $S(E)$ is a $B$-manifold.
\end{prop}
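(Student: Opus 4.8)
The plan is to recognise $S(E)$ as the base of a trivial line bundle whose total space is an open submanifold of $E$, and then to apply the two-out-of-three principle of the previous proposition to that line bundle. The key observations are that an open submanifold of a $B$-manifold is again a $B$-manifold — simply restrict the lift of the tangent classifying map — and that a trivial bundle always carries a tautological $B$-structure coming from the basepoint of the structure fibration.

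In detail, I would first choose a fibrewise inner product on $E$, so that $S(E)=\{v\in E:|v|=1\}$ becomes a smooth codimension-one submanifold of $E$ whose normal bundle is trivialised by the radial field $v\mapsto v$. Hence $\{v\in E:\tfrac12<|v|<2\}$ is an open subset of $E$, diffeomorphic via $(u,t)\mapsto tu$ to $S(E)\times\R$; being open in the $B$-manifold $E$, it inherits a $B$-structure on its tangent bundle. Now consider the projection $q\colon S(E)\times\R\to S(E)$, which is a trivial line bundle: it is classified by a constant map to $BO_1$, and since the structure fibration is pointed (so each $B_k\to BO_k$ is pointed), the constant map to the basepoint of $B_1$ is a $B$-structure on $q$. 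Thus conditions (2) and (3) of the previous proposition hold for $q$, whence condition (1) holds, i.e.\ $S(E)$ is a $B$-manifold.

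I do not expect a genuine obstacle here; the proof is short once the right vector bundle is isolated. The only points needing care are that the trivial line bundle really does carry a canonical $B$-structure — which is exactly where pointedness of the structure fibration is used — and the dimension bookkeeping, namely that $\dim E$ does not exceed the dimension of the base of the structure fibration so that strong multiplicativity is available in the range actually used; this is already implicit in the hypothesis that $E$ is a $B$-manifold. As an alternative to invoking the previous proposition, one can argue directly with the inclusion $i\colon S(E)\hookrightarrow E$: triviality of the normal bundle gives $i^{\ast}TE\cong TS(E)\oplus\underline{\R}$, and since $i^{\ast}TE$ pulls back a $B$-structure from $TE$ while $\underline{\R}$ has the tautological one, the two-out-of-three property for Whitney sums (as used inside the proof of the previous proposition) produces a $B$-structure on $TS(E)$.
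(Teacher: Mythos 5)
Your proof is correct and is essentially the paper's argument: the paper also reduces everything to the triviality of the normal bundle of $S(E)$ in $E$ (trivialized by the radial direction) and then invokes strong multiplicativity, exactly as in the alternative you sketch at the end via $i^{*}TE\cong TS(E)\oplus\underline{\R}$. Your first formulation through the open annulus and the previous proposition is only a mild repackaging of the same idea.
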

\begin{proof}
    This will follow at once from the fact that the normal bundle of $S(E)$ inside $E$ is trivial. Indeed, this normal bundle is clearly a line bundle, and a nonvanishing global section can be found by considering at every point $(x,v)$ in the sphere bundle the tangent vector given by the straight line to $(x,0)$.
\end{proof}

\begin{defi}[Pairs of manifolds]
    A pair of $B$-manifolds is a $B$-manifold $M$ with a submanifold $N$ of strictly smaller dimension that is also a $B$-manifold.
\end{defi}

\begin{prop}
    Let $M$ and $N$ be a pair of $B$-manifolds. Then the normal bundle of $N$ in $M$, denoted $\nu(N,M)$, is a $B$-bundle.
\end{prop}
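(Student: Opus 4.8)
The plan is to reduce the statement to the strongly multiplicative hypothesis, exactly as in the proof of the preceding proposition. Choosing a Riemannian metric on $M$ (the space of metrics being convex, hence contractible), the normal bundle $\nu(N,M)$ is realised as the orthogonal complement of $TN$ inside $TM|_N$, giving a splitting $TM|_N \cong TN \oplus \nu(N,M)$ that is canonical up to homotopy. Now I would chase the pullback square in the definition of strongly multiplicative with $(r,s) = (n, m-n)$, where $n = \dim N$ and $m = \dim M$: the $B$-structure on $M$ restricts to a lift of the classifying map of $TM|_N$ to $B_m$, the $B$-structure on $N$ is a lift of the classifying map of $TN$ to $B_n$, and under the splitting these two lifts sit over the two edges $N \to B_n \times B_m$ and $N \to BO_n \times BO_{m-n}$ of the square and agree over $BO_n \times BO_m$. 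The universal property of the pullback then produces a map $N \to B_n \times B_{m-n}$ whose second component is the sought lift of the classifying map of $\nu(N,M)$ to $B_{m-n}$, i.e.\ a $B$-structure on $\nu(N,M)$.

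If one instead wishes to cite the preceding proposition verbatim, the only extra ingredient is the tubular neighborhood theorem: it identifies the total space of $\nu(N,M)$ with an open neighborhood $U$ of $N$ in $M$. Since $TU = TM|_U$, the $B$-structure on $M$ restricts to one on $U$, and transporting it along the diffeomorphism (via its derivative, which is a bundle isomorphism) makes the total space of $\nu(N,M)$ a $B$-manifold. With $N$ a $B$-manifold by hypothesis, conditions (1) and (2) of the previous proposition hold for $\nu(N,M) \to N$, so it is a $B$-bundle.

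The only points that require attention are bookkeeping with homotopies: in the first approach, arranging that the two lifts agree strictly over $BO_n \times BO_m$ before invoking the (strict) universal property — handled by adjusting the $B$-structures within their vertical homotopy classes using the homotopy $TM|_N \cong TN \oplus \nu(N,M)$ — and, if one cares, checking that the resulting $B$-structure on $\nu(N,M)$ is independent of the choice of metric (resp.\ tubular neighborhood) up to equivalence, which follows from contractibility of the relevant space of choices. I do not expect any genuine obstacle here: this proposition is essentially a corollary of the strongly multiplicative condition, with the real work already done in the preceding results.
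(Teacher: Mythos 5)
Your first argument is exactly the paper's proof: the paper simply notes the natural splitting $\iota^*TM = TN\oplus\nu(N,M)$ and invokes strong multiplicativity, and your paragraph spells out the same pullback-square chase in more detail. The alternative route via the tubular neighborhood theorem and the two-out-of-three proposition is also valid but is not needed; the proposal is correct.
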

\begin{proof}
    Let $\iota:N\to M$ be the inclusion and consider $\iota^*TM$, a bundle over $N$. There is a natural splitting $\iota^*TM = TN\oplus \nu(N,M)$ and so $\nu(N,M)$ has a $B$-structure by strong multiplicativity.
\end{proof}

We shall apply \cref{sphere-bundles} to the bundle $S:=S(\nu(N,M)\oplus \underline{\R})$. We will have that there is a tubular neighborhood of $N$ in $M$ that is $B$-diffeomorphic to a tubular neighborhood of $N$ in $S$, since the $B$-structures are both induced from the ones on $N$ and $M$.

We are now ready to define $\SK$ of pairs of $B$-manifolds following Komiya \cite{komiya1986}. Note that a $B$-diffeomorphism is defined exactly so that we can glue two $B$-manifolds with boundary along a $B$-diffeomorphism to get another $B$-manifold. This ensures that the following constructions are well-defined.

\begin{defi}
    Consider the monoid of $n$-dimensional manifolds with $B$-structure and operation disjoint union, denoted $\mathcal{M}_n^B$. We have an equivalence relation generated by $M_1 \cup_\phi M_2 \sim M_1 \cup_\psi M_2$, where $\phi,\psi:\de M_1\to \de M_2$ are $B$-diffeomorphisms. We get a cancellative semigroup given by $\mathcal{M}_n^B$ modulo this relation. Its Grothendieck group is the $\SK$-group with $B$-structure, $\SK_n^B$. 
\end{defi}

\begin{defi}
    Consider the monoid of $(m,n)$-pairs of $B$-manifolds, denoted $\mathcal{M}_{m,n}^B$. We have an equivalence relation generated by $(M_1,N_1) \cup_\phi (M_2,N_2) \sim (M_1,N_1) \cup_\psi (M_2,N_2)$, where $(M_i,N_i)$ are pairs of manifolds with boundary and $\phi,\psi: \de M_1\to \de M_2$ are $B$-diffeomorphisms such that $\phi|_{\de N_1}$ and $\psi|_{\de N_1}$ are $B$-diffeomorphisms onto $\de N_2$. We get a cancellative semigroup given by $\mathcal{M}_{m,n}^B$ modulo this relation and call its Grothendieck group $\SK_{m,n}^B$.
\end{defi}

\begin{rmk}
    We will adopt a few shorthand notations for the $\SK$-groups that will be discussed in more detail in the rest of the paper. Unoriented $\SK$ is given by the structure that is the identity on $BO_n$ and will be denoted $\SK_n^O$. Oriented $\SK$ is given by the 1-connected cover $BSO_n\to BO_n$ and will be denoted simply $\SK_n$. $\SK$ of $B$-manifolds with a map to $X$, given by the structure in \cref{example-structures}(3), will be denoted $\SK_n^B(X)$. We will use similar conventions for $\SK$ of pairs.
\end{rmk}
\section{Splitting results for SK of pairs}\label{results}

Our main theorem is the following.

\begin{thm}
\label{main-thm}
    Let $B\to BO_k$ be a strongly multiplicative structure, with $k\geq m > n$. There is a split short exact sequence
    \[0\to \SK^B_m \xrightarrow{i} \SK^B_{m,n} \xrightarrow{j} \SK^{B}_n(B_{m-n}) \to 0,\]
    with $i([M]) = [M,\emptyset]$ and $j([M,N]) = [N,\nu]$ where $\nu: N \to B_{m-n}$ is the $B$-structure for the normal bundle of $N$ in $M$.
\end{thm}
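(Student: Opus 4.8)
The plan follows Komiya's strategy: produce explicit splittings of $i$ and $j$ and deduce exactness from a single cut-and-paste identity. Alongside the maps $i$ and $j$ of the statement I would use a forgetful homomorphism $F\colon\SK^B_{m,n}\to\SK^B_m$, $[M,N]\mapsto[M]$, and a section $s\colon\SK^B_n(B_{m-n})\to\SK^B_{m,n}$ of $j$: given $[N,f]$ in $\SK^B_n(B_{m-n})$, let $E_f\to N$ be the rank-$(m-n)$ $B$-bundle classified together with its $B$-structure by $f$, and set $s([N,f])=[S(E_f\oplus\underline{\R}),N]$, where $S(E_f\oplus\underline{\R})$ is an $m$-dimensional $B$-manifold by \cref{sphere-bundles} and $N$ sits inside it as the zero section. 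The first task is to check that $i,j,F,s$ are well-defined monoid homomorphisms, hence descend to the Grothendieck groups. This is immediate for $i$ and $F$. For $j$ one uses that a $B$-diffeomorphism $\phi\colon\de M_1\to\de M_2$ of the boundary of a pair restricting to a $B$-diffeomorphism $\de N_1\to\de N_2$ induces on the normal bundles of the $\de N_i$ an isomorphism respecting the $B$-structures (by strong multiplicativity), so $\phi$ is also admissible for the structure of \cref{example-structures}(3) with reference space $B_{m-n}$, and a cut-and-paste relation between pairs descends to one between the submanifolds equipped with their normal-bundle maps. For $s$ one needs $E\mapsto S(E\oplus\underline{\R})$ to be functorial for $B$-isomorphisms over $B_{m-n}$, so that cutting $N$ along a separating hypersurface and regluing lifts to the same operation performed fibrewise on $S(E_f\oplus\underline{\R})$.

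Two identities then come for free. First $F\circ i=\id$, so $i$ is split injective. Second $j\circ s=\id$: along the zero section the tangent bundle of $S(E_f\oplus\underline{\R})$ splits as $TN\oplus E_f$, so the normal bundle of $N$ there is $E_f$, and tracing $B$-structures through \cref{sphere-bundles} identifies its classifying map with $f$ up to equivalence. In particular $j$ is surjective and split by $s$.

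The core of the argument is the identity
\[
[M,\emptyset]+[S,N]=[M,N]+[S,\emptyset]\qquad\text{in }\SK^B_{m,n},
\]
where $S=S(\nu\oplus\underline{\R})$ with $\nu=\nu(N,M)$ and $N\subseteq S$ the zero section. Fix a tubular neighbourhood $T_M$ of $N$ in $M$ and one $T_S$ of $N$ in $S$; the pairs $(T_M,N)$ and $(T_S,N)$ are $B$-diffeomorphic, both being the disk bundle of $\nu$ with the $B$-structure induced from those of $N$ and the ambient manifold, so in particular $\de T_M$ and $\de T_S$ are $B$-diffeomorphic. Writing $(M,\emptyset)=(M\setminus\int T_M,\emptyset)\cup(T_M,\emptyset)$ and $(S,N)=(S\setminus\int T_S,\emptyset)\cup(T_S,N)$, the disjoint union $(M,\emptyset)\sqcup(S,N)$ is the gluing of $(M\setminus\int T_M,\emptyset)\sqcup(S\setminus\int T_S,\emptyset)$ to $(T_M,\emptyset)\sqcup(T_S,N)$ along a $B$-diffeomorphism of their common boundary. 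Regluing instead by the $B$-diffeomorphism matching $\de(M\setminus\int T_M)$ with $\de T_S$ and $\de(S\setminus\int T_S)$ with $\de T_M$ yields, after identifying $(T_S,N)$ with $(T_M,N)$, exactly $(M,N)\sqcup(S,\emptyset)$. Both gluings are trivial on the submanifolds, whose boundaries are empty, so this is an instance of the defining relation of $\SK^B_{m,n}$. Rearranging in the Grothendieck group, and using $j([M,N])=[N,\nu]$ so that $s(j([M,N]))=[S,N]$, gives
\[
[M,N]=i\bigl([M]-[S]\bigr)+s\bigl(j([M,N])\bigr),
\]
so the endomorphism $\id-s\circ j$ of $\SK^B_{m,n}$ lands in $\im i$ on generators, hence everywhere.

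Exactness is now formal: $j\circ i=0$ gives $\im i\subseteq\ker j$, and if $j(x)=0$ then $x=(\id-s\circ j)(x)\in\im i$, so $\ker j=\im i$; together with $F\circ i=\id$ and $j\circ s=\id$ this makes the sequence split short exact. I expect the real work to be the $B$-structure bookkeeping for the sphere-bundle construction: showing that $S(-\oplus\underline{\R})$ is functorial over $B_{m-n}$, which is what makes $s$ well defined on cut-and-paste classes, and that it induces exactly the prescribed $B$-structure on the normal bundle of the zero section, which is what makes $j\circ s=\id$. The remaining steps are formal consequences of \cref{sphere-bundles}, strong multiplicativity, and uniqueness of induced $B$-structures.
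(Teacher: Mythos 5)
Your proof is correct and follows essentially the same route as the paper's: the section of $j$ via the sphere bundle $S(E\oplus\underline{\R})$ with $N$ embedded as a section, and the cut-and-paste move swapping the tubular neighbourhoods of $N$ in $M$ and in $S$, are exactly the paper's argument. Your packaging of the kernel computation as the single relation $[M,\emptyset]+[S,N]=[M,N]+[S,\emptyset]$, so that $\id-s\circ j$ visibly lands in $\im i$, is slightly cleaner bookkeeping than the paper's auxiliary manifolds $K_i$, $K_i'$, and you correctly isolate as the delicate point the well-definedness of $s$ on cut-and-paste classes, which the paper uses implicitly when asserting $[S_1,N_1]=[S_2,N_2]$.
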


\begin{proof}
    Now that we've set up the language of strongly multiplicative $B$-structures, the proof is essentially the same as Komiya's proof in the unoriented case. Firstly, note that $j$ is well-defined because the normal bundle of $N$ in $M$ has a canonical $B$-structure by strong multiplicativity.

    It is clear that $i$ is injective and that $j\circ i = 0$. To see that $j$ is surjective, let $[N,\nu]\in\SK_n^B(B_{m-n})$. Let $E$ be the pullback of the universal bundle over $B_{m-n}$ by $\nu$ and consider the sphere bundle $S:=S(E\oplus\underline{\R})$. $E$ is a $B$-bundle by definition, so by \cref{sphere-bundles} $S$ is a $B$-manifold. We have a projection $\pi:S\to N$ and additionally we can view $N$ as a submanifold of $S$ by considering the north poles of all the spheres. The normal bundle of $N$ in $S$ is now clearly classified by $\nu$, so $j[S,N] = [N,\nu]$.
    
    The sequence splits since we can define $k:\SK_{m,n}(X)\to \SK_m(X)$ mapping $[M,N] \mapsto [M]$. It just remains to show that $\ker j \subset \im i$.
    
    To show that $\ker j \subset \im i$, let $[M,N]\in\ker j$. We can write $[M,N] = [M_1,N_1] - [M_2, N_2]$ in $\SK_{m,n}^B$. Since $[M,N]$ was in the kernel of $j$, it follows that $[N_1,\nu_1] = [N_2,\nu_2]$ in $\SK_n^B(B_{m-n})$, where $\nu_i:N_i \to B_{m-n}$ are the $B$-structures on the normal bundles of $N_i$ in $M_i$ for $i=1,2$. We can define $B$-manifolds $S_i$ for $i=1,2$ analogously to $S$ above.

    Let us find a closed neighborhood $T_i'$ of $N_i$ in $S_i$ that is $B$-diffeomorphic to a neighborhood $T_i$ of $N_i$ in $M_i$, using the tubular neighborhood theorem. We denote the interiors of the $T_i$ and $T_i'$ by $\mathring{T_i'}$ and $\mathring{T_i}$, respectively.

    We can split the pair $(M_i, N_i)$ into $(M_i-\mathring{T_i},\emptyset) \cup_{\text{id}} \overline{(T_i, N_i)}$. Make the following definitions:
    \begin{align*}
        K_i &:= (S_i - \mathring{T_i'}) \cup \overline{(M_i-\mathring{T_i})} \\
        K_i' &:= (M_i-\mathring{T_i}) \cup \overline{(M_i-\mathring{T_i})}.
    \end{align*}
    In $\SK_{m,n}^B$ we can now write
    \begin{align*}
        [M_i, N_i] + [K_i, \emptyset]
        =& [(M_i-\mathring{T_i},\emptyset)\cup \overline{(T_i,N_i)}] + [(S_i-\mathring{T_i'},\emptyset)\cup \overline{(M_i-\mathring{T_i},\emptyset)}]\\
        =& [(M_i-\mathring{T_i},\emptyset)\cup\overline{(M_i-\mathring{T_i},\emptyset)}] + [(S_i-\mathring{T_i'},\emptyset)\cup\overline{(T_i,N_i)}]\\
        =& [K_i',\emptyset] + [S_i, N_i].
    \end{align*}
    \noindent
    However, notice that $[S_1, N_1] = [S_2, N_2]$, so now computing $[M_1, N_1] - [M_2, N_2]$ will yield
        \[[M_1, N_1] - [M_2, N_2]
        = [K_1', \emptyset] + [K_2, \emptyset] - [K_1,\emptyset] - [K_2', \emptyset],\]
    proving that $[M_1, N_1] - [M_2, N_2]\in \im(i)$.
\end{proof}

In \cite{komiya1986}, the theorem is then strengthened by simplifying the right term in the sequence. There it is used that the natural map $\SK_n^O(BO(k)) \to \SK_n^O$ is an isomorphism. Such vanishing results do not hold in the generality of \cref{main-thm}. However, there is a few interesting cases where results can be proved, and the rest of this paper will be dedicated to this theory. In particular we will show the following two theorems.

\begin{thm}
    \label{komiya-unori}
    Let $X$ be a pointed path connected space and $n<m$. There is a split short exact sequence
    \[0 \longrightarrow \SK_m^O(X) \xrightarrow{i}  \SK_{m,n}^O(X) \xrightarrow{j}  \SK_n^O(X\times X) \longrightarrow 0,\]
    with $i([M,f]) = [(M,\emptyset), (f, \emptyset)]$ and $j([(M,N), (f,g)] = [N, f|_N\times g]$.
\end{thm}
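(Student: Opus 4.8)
The plan is to obtain the sequence by applying \cref{main-thm} to the strongly multiplicative structure $B = BO_k \times X \to BO_k$ of \cref{example-structures}(3) (taking $k = m$), and then simplifying the right-hand term. For this structure every pullback is $B_j = BO_j\times X \to BO_j$, so a $B$-manifold is exactly a manifold with a map to $X$, a pair of $B$-manifolds is exactly a pair $(M,N)$ with \emph{independent} maps $f\colon M\to X$ and $g\colon N\to X$, and by the conventions $\SK_m^B = \SK_m^O(X)$ and $\SK_{m,n}^B = \SK_{m,n}^O(X)$. Unravelling the definition of $\SK_n^B(-)$ (the structure of \cref{example-structures}(3) built on top of $B$) together with $B_{m-n} = BO_{m-n}\times X$ identifies $\SK_n^B(B_{m-n})$ with $\SK_n^O(X\times BO_{m-n}\times X)$, whose objects are $n$-manifolds $N$ equipped with their own map to $X$, a map to $BO_{m-n}$, and a further map to $X$. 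Thus \cref{main-thm} yields a split short exact sequence
\[0\to \SK_m^O(X)\xrightarrow{i}\SK_{m,n}^O(X)\xrightarrow{j}\SK_n^O(X\times BO_{m-n}\times X)\to 0.\]
Here $i$ is visibly the stated map. For $j$, \cref{main-thm} sends $[(M,N),(f,g)]$ to $[N,\nu]$ with $\nu\colon N\to B_{m-n}=BO_{m-n}\times X$ the induced $B$-structure on $\nu(N,M)$; inspecting the combination map $\psi_{n,m-n}$ for this product structure shows that $\nu$ is the pair consisting of the classifying map of the normal bundle and of $f|_N$, while $N$ retains its own structure $g$. So $j([(M,N),(f,g)])$ is represented by $N$ together with the three maps $g$, the classifying map of $\nu(N,M)$, and $f|_N$.

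It remains to delete the $BO_{m-n}$-coordinate. The key input is the relative form of the isomorphism $\SK_n^O(BO_k)\cong\SK_n^O$ used by Komiya: for any pointed path connected space $W$ and any $j$, the projection $p\colon W\times BO_j\to W$ induces an isomorphism $\SK_n^O(W\times BO_j)\xrightarrow{\cong}\SK_n^O(W)$. Surjectivity follows from the section $s$ classifying the trivial bundle. For injectivity it suffices to prove $[M,h,E]=[M,h,\underline{\R^j}]$ in $\SK_n^O(W\times BO_j)$ for every manifold $M$ with reference map $h\colon M\to W$ and auxiliary rank-$j$ bundle $E$: then if $p_*[\alpha]=0$, writing $[\alpha]=\sum\pm[M_i,h_i,E_i]$ and trivializing each $E_i$ gives $[\alpha]=\sum\pm s_*[M_i,h_i]=s_*p_*[\alpha]=0$. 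The equality $[M,h,E]=[M,h,\underline{\R^j}]$ is precisely what Komiya's argument establishes: one makes the classifying map of the auxiliary bundle constant by a sequence of cuts and pastes, during which a reference map to $W$ is never modified and so is carried along passively. Applying this with $W=X\times X$ and $j=m-n$ identifies $\SK_n^O(X\times BO_{m-n}\times X)\cong\SK_n^O(X\times X)$ and turns $j$ into $[(M,N),(f,g)]\mapsto[N,\,f|_N\times g]$, as claimed; that the splitting $[M,N]\mapsto[M]$ of \cref{main-thm} descends to a splitting of the stated sequence is immediate.

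The main obstacle is the lemma $\SK_n^O(W\times BO_j)\cong\SK_n^O(W)$, i.e.\ making precise that the auxiliary bundle is irrelevant for $\SK$ relative to an arbitrary reference map; this is where the real work sits, and it amounts to reproving (or citing a relative version of) Komiya's computation, everything else being formal bookkeeping with \cref{main-thm}. A smaller point requiring care is the behaviour of the combination maps $\psi_{r,s}$ for the product structure, which is what pins down that $f|_N$ (rather than $g$, or a combination of the two) appears in the $X$-coordinate of the normal $B$-structure $\nu$.
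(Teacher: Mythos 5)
Your reduction of \cref{komiya-unori} to \cref{main-thm} is exactly the paper's: apply the theorem to the structure $BO\times X\to BO$ of \cref{example-structures}(3), identify the third term with $\SK_n^O(X\times BO(m-n)\times X)$, and check via the combination maps that the induced structure on $\nu(N,M)$ records its classifying map together with $f|_N$ while $N$ keeps its own map $g$. The gap is in the key lemma that deletes the $BO(m-n)$ factor. You isolate it correctly ($\SK_n^O(W\times BO_j)\cong\SK_n^O(W)$) and your formal reduction of injectivity to the equality $[M,h,E]=[M,h,\underline{\R^j}]$ is fine, but your justification of that equality --- that ``Komiya's argument makes the classifying map of the auxiliary bundle constant by a sequence of cuts and pastes, during which a reference map to $W$ is never modified'' --- is an assertion, not a proof, and it does not describe how the known argument works. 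Cut-and-paste moves replace $M$ by other manifolds, so there is no fixed reference map being ``carried along passively''; and the absolute isomorphism $\SK_*^O(BO_j)\cong\SK_*^O$ is not obtained by trivializing the bundle on a fixed $M$, but by a bordism computation: the $\Z/2$-homology of $BO_j$ is generated by classes coming from $\RP^i$'s, and $[\RP^i,f]=[\RP^i,*]$ in $\SK_*^O(-)$. Note also that the analogous ``make the map constant by cutting and pasting'' statement for a general target is false --- the remark closing \cref{section-unori} exhibits $X$ with $\SK_*^O(X)\not\cong\SK_*^O$ --- so any proof must exploit something specific to $BO_j$, which your sketch does not.

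The paper fills this gap (\cref{remove-BO}) by a different mechanism. The K\"unneth isomorphism for unoriented bordism (\cref{kunneth-O}) writes a class of $\Omega_*^O(W\times BO_j)$ as a sum of products $[M_1\times M_2, h_1\times e_2]$ with $h_1\colon M_1\to W$ and $e_2\colon M_2\to BO_j$; the difference between such a class and its projection is then a sum of products $[M_1,h_1]\cdot([M_2,e_2]-[M_2,*])$, each of which dies in $\bSK_*^O(W\times BO_j)$ because $[M_2,e_2]-[M_2,*]$ dies in $\SK_*^O(BO_j)$; the exact sequence of \cref{skbook1.1} and the five lemma then upgrade the statement from $\bSK$ to $\SK$. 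The essential point is that the K\"unneth decomposition is what separates the $W$-map from the $BO_j$-map, so that the special features of $BO_j$ can be applied to one tensor factor without disturbing the other. To complete your argument you would need either to reproduce this bordism-theoretic step or to supply a genuinely geometric proof of $[M,h,E]=[M,h,\underline{\R^j}]$; the latter is not available off the shelf and would need real work.
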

\begin{proof}
    First apply \cref{main-thm} with the structure $BO\times X \to BO$ given by projection onto the first coordinate. The natural projection $\SK_n^O(X\times BO(m-n) \times X) \to \SK_n^O(X\times X)$ is an isomorphism, which will be proved in \cref{section-unori}.
\end{proof}

\begin{thm}
    \label{komiya-ori}
    Let $X$ be a pointed path connected space and $n<m$. There is a split short exact sequence
    \[0 \longrightarrow \SK_m(X) \xrightarrow{i} \SK_{m,n}(X) \xrightarrow{j}  \SK_n(X\times X) \longrightarrow 0,\]
    with $i([M,f]) = [(M,\emptyset), (f, \emptyset)]$ and $j([(M,N), (f,g)] = [N, f|_N\times g]$.
\end{thm}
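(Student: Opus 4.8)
The plan is to obtain \cref{komiya-ori} from \cref{main-thm} and \cref{forget-Y}, mirroring the derivation of \cref{komiya-unori} but replacing the unoriented vanishing input by the generalized Neumann theorem \cref{forget-Y}. I would begin by applying \cref{main-thm} to the strongly multiplicative structure $BSO\times X\to BO$ of \cref{example-structures}(3), built on the oriented structure $BSO\to BO$; its pullbacks are the fibrations $BSO_k\times X\to BO_k$, so a $B$-manifold is an oriented manifold with a map to $X$, and the theorem yields a split short exact sequence
\[0\to\SK_m(X)\xrightarrow{i}\SK_{m,n}(X)\xrightarrow{j}\SK_n^{B}(BSO_{m-n}\times X)\to 0,\]
with $i$ as claimed and $j([(M,N),(f,g)])=[N,\nu]$, where $\nu\colon N\to BSO_{m-n}\times X$ records the oriented normal bundle of $N$ in $M$ together with the $X$-component of the $B$-structure that strong multiplicativity puts on that normal bundle.

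Next I would unwind the right-hand group. Since a $B$-manifold here is an oriented manifold together with a map to $X$, the group $\SK_n^{B}(BSO_{m-n}\times X)$ is the ordinary oriented $\SK$-group $\SK_n(X\times BSO_{m-n}\times X)$ of oriented $n$-manifolds with a map to the triple product: the first $X$ carries the ambient $B$-structure (the map $g$), the middle $BSO_{m-n}$ classifies the oriented normal bundle, and the last $X$ is the $X$-coordinate of $\nu$. Here one must check the bookkeeping: the combination map $\psi_{r,s}$ for the structure $BSO\times X$ is, on the $X$-coordinates, a projection, so the two-out-of-three rule forces the $X$-coordinate of $\nu$ to be $f|_N$ (the ambient $X$-coordinate restricts to the normal data), not $g$. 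With this, $j([(M,N),(f,g)])$ becomes $[N,\,g\times(\text{normal class})\times f|_N]$.

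Then I would delete the middle factor. As $SO_{m-n}$ is connected, $BSO_{m-n}$ is a simply connected pointed space, and $\pi_1(X\times X)=\pi_1(X)\times\pi_1(X)$ is finitely presented whenever $\pi_1(X)$ is (the general case reduces to this one by writing $X$ as a filtered colimit of finite subcomplexes, since every compact manifold with a map to $X$ and every cut-and-paste relation among such is supported on a finite subcomplex, so $\SK_n(-)$ commutes with filtered colimits). Hence \cref{forget-Y}, applied with $X\times X$ in the role of the base and $BSO_{m-n}$ in the role of $Y$, shows that the projection
\[\SK_n(X\times BSO_{m-n}\times X)\longrightarrow\SK_n(X\times X)\]
forgetting the $BSO_{m-n}$-coordinate is an isomorphism. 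Composing $j$ with this isomorphism sends $[(M,N),(f,g)]$ to $[N,f|_N\times g]$, and transporting the splitting of \cref{main-thm} along the isomorphism gives the asserted split short exact sequence.

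The assembly above is routine; the real content sits in the input \cref{forget-Y}, which generalizes Neumann's theorem that oriented $\SK$ of manifolds mapping to a simply connected space is trivial. That is where the genuine difficulty lies — showing that oriented cutting and pasting can absorb all of a map into the simply connected factor — and it is the step I expect to require actual work, the only nontrivial step here beyond unwinding definitions and the factor bookkeeping of the second paragraph.
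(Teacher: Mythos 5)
Your proposal is correct and follows essentially the same route as the paper: apply \cref{main-thm} to the strongly multiplicative structure $BSO\times X\to BO$, identify the right-hand term with $\SK_n(X\times BSO_{m-n}\times X)$, and delete the simply connected factor $BSO_{m-n}$ via \cref{forget-Y}. Your extra care about the finitely-presented-$\pi_1$ hypothesis of \cref{forget-Y} (handled by a filtered-colimit reduction) is a point the paper's one-line proof silently glosses over, and is a worthwhile addition rather than a deviation.
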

\begin{proof}
    First apply \cref{main-thm} with the structure $BSO\times X \to BO$ given by projection onto the first coordinate and applying the natural map $BSO\to BO$. The natural projection $\SK_n(X\times BO(m-n) \times X) \to \SK_n(X\times X)$ is an isomorphism, which will be proved in \cref{section-ori}.
\end{proof}

\section{Some theorems about SK}

In this section we will state a number of theorems about SK that will be used in the remainder of the paper. More details and proofs can be found in \cite{skbook} or in \cite{neumann1975}. We start with two essential theorems for computing $\SK(X)$. First a definition.

\begin{defi}
    The group $\bSK_n(X)$ is the group $\SK_n(X)$ modulo bordism, that is, we set every class with a representative $(M,f)$ that bounds $(W,\Tilde{f})$ equal to zero.
\end{defi}

\begin{thm}[{\cite[Theorem 1.1]{skbook}}]
\label{skbook1.1}
    There is a split short exact sequence
    \[\begin{tikzcd}
    0 \arrow[r] & I_n(X) \arrow[r] & \SK_n(X) \arrow[r] & \bSK_n(X) \arrow[r] & 0
    \end{tikzcd},\]
    where $I_n(X)$ denotes the subgroup of $\SK_n(X)$ generated by $[S^n, *]$. This group is isomorphic to $\Z$ if $n$ is even and trivial if $n$ is odd. In particular it does not depend on $X$.
\end{thm}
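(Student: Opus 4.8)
The plan is to follow the strategy of Karras, Kreck, Neumann and Ossa. The surjection $\SK_n(X)\twoheadrightarrow\bSK_n(X)$ is immediate, since by definition $\bSK_n(X)=\SK_n(X)/B_n(X)$, where $B_n(X)$ is the subgroup generated by the classes of null-bordant pairs. Because $(S^n,*)=\de(D^{n+1},*)$ we have $I_n(X)\subseteq B_n(X)$, so exactness at $\SK_n(X)$ amounts to the reverse inclusion: every null-bordant class lies in $I_n(X)$. The isomorphism type of $I_n(X)$ and the splitting are then handled separately and more explicitly.

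The two combinatorial inputs are a rearrangement identity and the computation of products of spheres in $\SK$. For compact manifolds with boundary $A_1,A_2,B_1,B_2$ whose boundaries are all $B$-diffeomorphic to a fixed $L$, with reference maps matching on $L$, I would prove
\[[A_1\cup B_1]+[A_2\cup B_2]=[A_1\cup B_2]+[A_2\cup B_1]\]
in $\SK_n(X)$ by forming the closed manifold $(A_1\sqcup A_2)\cup(B_1\sqcup B_2)$ — in which the gluing locus $L\sqcup L$ is a two-sided codimension-one submanifold, being equipped with a collar — and applying the defining SK-relation to re-glue it with the two copies of $L$ swapped. Feeding in the splitting $S^n=(D^p\times S^q)\cup(S^{p-1}\times D^{q+1})$ coming from $S^n=\de(D^p\times D^{q+1})$, together with the base case $S^0\times S^m=S^m\sqcup S^m$, an induction on $p$ then yields $[S^p\times S^q,h]\in I_n(X)$ for every $p+q=n$ and every reference map $h$ pulled back from the first factor (the case needed below); for the constant map this class equals $0$ if $p$ or $q$ is odd and $2[S^n,*]$ if both are even, but only membership in $I_n(X)$ matters. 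Combined with the fact that $\chi\colon\SK_n(X)\to\Z$ sends $[S^n,*]$ to $2$ (so $I_n(X)\cong\Z$ for $n$ even) and a direct cut-and-paste showing $[S^n,*]=0$ for $n$ odd — the higher-dimensional form of the one-dimensional observation that a disjoint union of circles is SK-equivalent to a single circle — this determines $I_n(X)$, visibly independently of $X$.

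For $B_n(X)\subseteq I_n(X)$, let $(M,f)=\de(W,F)$ and choose a handle decomposition of $W$ built up from the empty manifold. Its boundary begins as a disjoint union of spheres $S^n$, the boundaries of the $0$-handles, each carrying a null-homotopic reference map since it extends over a $0$-handle, so this stage contributes an element of $I_n(X)$; each further handle then modifies the boundary by a surgery. I would show that one surgery changes the class in $\SK_n(X)$ by an element of $I_n(X)$: writing it as $A\cup(S^{k-1}\times D^{n-k+1})\rightsquigarrow A\cup(D^k\times S^{n-k})$ along $L=S^{k-1}\times S^{n-k}$, apply the rearrangement identity with an auxiliary copy of $S^{k-1}\times D^{n-k+1}$ to get
\[[\,\text{before}\,]+\big[(S^{k-1}\times D^{n-k+1})\cup(D^k\times S^{n-k})\big]=[\,\text{after}\,]+\big[(S^{k-1}\times D^{n-k+1})\cup(S^{k-1}\times D^{n-k+1})\big].\]
The first auxiliary term is a class of $S^n=\de(D^k\times D^{n-k+1})$ — the gluing diffeomorphism being irrelevant in $\SK$ — whose reference map extends over the attached handle and is hence null-homotopic; the second is a class of the double $S^{k-1}\times S^{n-k+1}$, with reference map pulled back from the first factor. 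By the previous paragraph both lie in $I_n(X)$, so the surgery correction does too, and summing the increments over all the handles gives $[M,f]\in I_n(X)$.

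Finally the splitting. For $n$ odd it is vacuous. For $n$ even, the Euler characteristic and the signature both descend to homomorphisms $\SK_n(X)\to\Z$ — $\chi$ by inclusion–exclusion and $\sigma$ by Novikov additivity (with $\sigma\equiv0$ unless $4\mid n$), neither depending on the gluing diffeomorphism — and $\chi(V)-\sigma(V)$ is even for every closed oriented $V^n$ (Poincaré duality together with $\sigma=b^+-b^-$ on middle cohomology when $4\mid n$; $\sigma=0$ and the middle Betti number even when $n\equiv2\bmod4$). Hence $\tfrac12(\chi-\sigma)\colon\SK_n(X)\to\Z$ is a homomorphism carrying $[S^n,*]$ to $1$, which retracts $i$. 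I expect the main obstacle to be the handle-by-handle bookkeeping in the third paragraph — in particular making precise how the reference maps behave through each handle, so that the auxiliary sphere and sphere-product summands genuinely fall under the computations of the second paragraph and all re-gluings are honest $B$-diffeomorphisms — together with the separate, elementary but not purely formal argument that $[S^n,*]=0$ when $n$ is odd.
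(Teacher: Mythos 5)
Your proposal is correct and follows the standard argument of Karras--Kreck--Neumann--Ossa: the rearrangement identity, the computation of $[S^p\times S^q]$, the handle-by-handle reduction of a null-bordant class to sphere classes, and the retraction $\tfrac12(\chi-\sigma)$ are exactly the ingredients of the proof in \cite{skbook}, which is all the paper itself offers (it cites the result without reproving it). The two points you flag as delicate --- the behaviour of reference maps through the handle argument and the separate cut-and-paste showing $[S^n,*]=0$ for $n$ odd (e.g.\ cutting $S^{2m+1}$ along $S^m\times S^m$ and regluing to obtain $S^m\times S^{m+1}$, then feeding this into your sphere-product recursion) --- do go through as you anticipate.
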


This means we can compute $\SK_*(X)$ by only looking at $\bSK_*(X)$. The next theorem gives us a means of computing $\bSK_*(X)$. Let $F_n(X)$ be the subgroup of the bordism group $\Omega_n(X)$ given by all elements with a representative $(M,f)$ such that $M$ fibers over $S^1.$

\begin{thm}[{\cite[Theorem 1.2]{skbook}}]
\label{skbook1.2}
    There is a short exact sequence
    \[\begin{tikzcd}
    0 \arrow[r] & F_n(X) \arrow[r] & \Omega_n(X) \arrow[r] & \bSK_n(X) \arrow[r] & 0
    \end{tikzcd}.\]
\end{thm}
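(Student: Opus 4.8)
The plan is to reduce the statement to a single equality of subgroups of the bordism group and then to establish that equality by a mapping-torus argument. First I would repackage the right-hand term. Since $\bSK_n(X)$ is obtained from the closed $n$-manifolds equipped with a map to $X$ by imposing both the cut-and-paste relations and the bordism relation, and the order in which these two quotients are taken is immaterial, there is a natural isomorphism $\bSK_n(X)\cong\Omega_n(X)/R_n(X)$, where $R_n(X)\subseteq\Omega_n(X)$ is the subgroup generated by the bordism classes of all differences $[M_1,f_1]-[M_2,f_2]$ of cut-and-paste pairs. Under this identification the map in the sequence is simply the quotient map $q:\Omega_n(X)\to\bSK_n(X)$, which is a well-defined surjection with $\ker q=R_n(X)$. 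Thus the exactness asserted by the theorem is equivalent to proving $R_n(X)=F_n(X)$, and I would spend the rest of the argument on this equality.

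For the inclusion $R_n(X)\subseteq F_n(X)$ I would show that each generator is represented by a manifold fibering over $S^1$. Given a cut-and-paste pair $M_1=A\cup_\phi B$ and $M_2=A\cup_\psi B$ glued along a hypersurface $V=\de A$, set $\theta=\psi\phi^{-1}\in\mathrm{Diff}(V)$ and let $T(\theta)=V\times[0,1]/((x,1)\sim(\theta(x),0))$ be its mapping torus, a closed $n$-manifold fibering over $S^1$. I would construct an explicit bordism realizing $[M_1,f_1]-[M_2,f_2]=[T(\theta),f_\theta]$ in $\Omega_n(X)$ by gluing the two cylinders $A\times I$ and $B\times I$ along their lateral boundaries $\de A\times I$ and $\de B\times I$, using $\phi$ near one end and $\psi$ near the other and smoothing the resulting corners; the two horizontal ends assemble to $M_1$ and $\overline{M_2}$, while the seam between the cylinders traces out $T(\theta)$. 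The compatibility conditions that make $\phi$ and $\psi$ legal gluings force $\theta$ to preserve $f_B|_V$ up to homotopy, so $T(\theta)$ carries a map $f_\theta$ to $X$ and the whole bordism carries a compatible map. Since $T(\theta)$ fibers over $S^1$, every generator of $R_n(X)$ then lies in $F_n(X)$, and as $F_n(X)$ is a subgroup we conclude $R_n(X)\subseteq F_n(X)$.

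For the reverse inclusion I would start from a representative $M=T(\varphi)=F\times[0,1]/((x,1)\sim(\varphi(x),0))$ with fiber $F$, monodromy $\varphi$, and map $f:M\to X$. Restricting $f$ determines a map $a:=f|_F$ together with a homotopy $H$ from $a$ to $a\circ\varphi$, namely the monodromy of the $X$-structure around the circle. Now consider the single piece $(F\times I,\,a\circ\mathrm{pr}_F)$ and glue its two ends to one another: regluing by $\varphi$, with $H$ as the homotopy certifying the gluing, reconstructs $(M,f)$ up to homotopy of the map, whereas regluing by the identity with the constant homotopy yields the product $(F\times S^1,\,a\circ\mathrm{pr}_F)$. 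These two closed manifolds form a cut-and-paste pair, so $[M,f]-[F\times S^1,\,a\circ\mathrm{pr}_F]\in R_n(X)$; and the product map $a\circ\mathrm{pr}_F$ extends over $F\times D^2$, whence $[F\times S^1,\,a\circ\mathrm{pr}_F]=0$ in $\Omega_n(X)$. Therefore $[M,f]\in R_n(X)$, giving $F_n(X)\subseteq R_n(X)$ and completing the equality.

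I expect the bordism construction in the inclusion $R_n(X)\subseteq F_n(X)$ to be the main obstacle. The object assembled from the two cylinders has corners that must be smoothed without disturbing the three boundary pieces, and one must check that the monodromy $\theta=\psi\phi^{-1}$ together with the gluing homotopies produces a genuinely well-defined map $T(\theta)\to X$ over which the bordism's map restricts correctly. Tracking the map to $X$, and in the oriented setting the orientations on $M_1$, $\overline{M_2}$ and $T(\theta)$, through both the bordism and the regluings is the delicate part; the underlying manifold-level assertions are the classical ones of Karras, Kreck, Neumann and Ossa \cite{skbook}.
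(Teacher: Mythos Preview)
The paper does not supply its own proof of this statement: it is quoted verbatim from \cite[Theorem~1.2]{skbook}, with the surrounding text explicitly directing the reader to that reference for details. There is therefore no in-paper argument to compare your proposal against.

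That said, your outline is the standard proof one finds in \cite{skbook}. The identification $\bSK_n(X)\cong\Omega_n(X)/R_n(X)$ and the reduction to $R_n(X)=F_n(X)$ are exactly how the argument is organised there, and both inclusions are handled by the mapping-torus constructions you describe. A couple of small points are worth tightening. First, the ``self-gluing'' of $F\times I$ is not literally of the form $M_1\cup_\phi M_2\sim M_1\cup_\psi M_2$ required by the SK relation; you should cut $F\times I$ once more into two half-cylinders so that the regluing genuinely fits the two-piece format. Second, the map you obtain on $T(\varphi)$ after regluing $(F\times I,\,a\circ\mathrm{pr}_F)$ with the homotopy $H$ is only homotopic to the original $f$, not equal to it; this is harmless since both bordism and SK are insensitive to homotopy of the reference map, but it should be said. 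With those adjustments your argument is complete, and the corner-smoothing and orientation bookkeeping you flag as the delicate step are indeed the only places requiring care.
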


We also use the language of open book decompositions. Given a $B$-manifold $V$ with boundary and a $B$-diffeomorphism $h:V\to V$ that fixes the boundary, we can form the mapping torus $V_h$. This will have boundary $\de V \times S^1$, and we get a $B$-manifold without boundary by gluing on a copy of $\de V\times D^2$. This is called the \textit{open book} obtained from $V$ and $h$. We say a $B$-manifold $M$ has a $B$-\textit{open book decomposition} if we can pick $V$ and $h$ such that $M$ is $B$-diffeomorphic to the open book obtained from $V$ and $h$. Let us follow the convention that an open book decomposition, without specified structure, means an oriented open book decomposition. We have the following result.

\begin{thm}[\cite{skbook}, 6.4]
    If a closed connected oriented manifold $M$ has an OBD, then for any map $f:M\to X$, $[M,f] = 0$ in $\bSK_*(X)$.
\end{thm}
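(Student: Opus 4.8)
The plan is to combine \cref{skbook1.2} with an explicit bordism over $X$. By \cref{skbook1.2}, $\bSK_n(X)\cong\Omega_n(X)/F_n(X)$, where $F_n(X)$ consists of those bordism classes admitting a representative $(N,g)$ with $N$ fibering over $S^1$; so it suffices to show that $(M,f)$ is bordant over $X$ to such a pair. Write the open book decomposition as $M = V_h\cup_{\partial V\times S^1}(\partial V\times D^2)$, where $V$ is the page, $h\colon V\to V$ is the monodromy (a $B$-diffeomorphism fixing $\partial V$, in particular orientation preserving), $V_h\to S^1$ is its mapping torus, and $\partial V\times D^2$ is a closed tubular neighbourhood of the binding, glued along $\partial V_h = \partial(\partial V\times D^2) = \partial V\times S^1$.

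The first step is to normalise $f$ near the binding. For each $b\in\partial V$ the meridian disk $\{b\}\times D^2\subseteq M$ is contractible, so the loop $f|_{\{b\}\times\partial D^2}$ is null-homotopic in $X$; carrying out the radial contraction of $D^2$ fibrewise produces a homotopy over $\partial V\times D^2$ from $f|_{\partial V\times D^2}$ to $(f|_{\partial V})\circ\mathrm{pr}_{\partial V}$. By the homotopy extension property this extends to a homotopy of $f$ on all of $M$, and homotopic maps are bordant over $X$; so, after replacing $f$, we may assume that $f$ is pulled back from the binding on the tubular neighbourhood $\partial V\times D^2$.

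Next I would build the bordism. Let $DV = V\cup_{\partial V}V$ be the fibrewise double of the page and $Dh = h\cup\mathrm{id}\colon DV\to DV$; then $(DV)_{Dh}$ is a closed oriented manifold fibering over $S^1$ with fibre $DV$, and one checks $(DV)_{Dh} = V_h\cup_{\partial V\times S^1}(V\times S^1)$. From the identity $\partial(V\times D^2) = (V\times S^1)\cup_{\partial V\times S^1}(\partial V\times D^2)$ one sees that $V\times D^2$ is a cobordism, rel $\partial V\times S^1$, from $\partial V\times D^2$ to $V\times S^1$; a small bicollar of the corner $\partial V\times S^1$ in $\partial(V\times D^2)$ is a copy of $\partial V\times S^1\times[-1,1]$ joining these two faces. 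Set
\[
W \;=\; \bigl(V_h\times[-1,1]\bigr)\;\cup_{\partial V\times S^1\times[-1,1]}\;\bigl(V\times D^2\bigr),
\]
gluing the vertical face of $V_h\times[-1,1]$ to that bicollar. After rounding corners, $\partial W$ consists of $M$ (the end where $V_h$ is capped off by $\partial V\times D^2$) and $(DV)_{Dh}$ (the end where $V_h$ is capped off by $V\times S^1$). A map $\widetilde f\colon W\to X$ with $\widetilde f|_M = f$ is assembled from $(f|_{V_h})\circ\mathrm{pr}$ on $V_h\times[-1,1]$ and $(f|_V)\circ\mathrm{pr}_V$ on the cap $V\times D^2$, where $f|_V$ is the restriction of $f$ to a page; with the collars chosen small the two descriptions agree on the overlap, since there $f$ is pulled back from $\partial V$. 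Hence $(M,f)$ is bordant over $X$ to $\bigl((DV)_{Dh},\widetilde f|_{(DV)_{Dh}}\bigr)\in F_n(X)$, and so $[M,f]=0$ in $\bSK_n(X)$.

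I expect the main obstacle to be precisely the compatibility of the bordism with the map to $X$: the evident bordism from $M$ to the mapping torus $(DV)_{Dh}$ does not carry $f$ until one notices that $f$ necessarily kills the meridian circles of the binding — they bound disks in $M$ — and uses this to homotope $f$ to one pulled back from the binding there. Granting that normalisation, the construction of $W$ and the orientation and corner-rounding bookkeeping are routine.
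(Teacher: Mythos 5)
Your argument is correct in substance, but note that the paper does not prove this statement at all: it is quoted verbatim from \cite{skbook} (Theorem 6.4) and used as a black box, so there is no in-paper proof to compare against. What you have written is essentially the standard argument behind that citation: reduce via \cref{skbook1.2} to showing $[M,f]\in F_n(X)$, normalise $f$ over the binding neighbourhood, and exhibit the explicit bordism $W=(V_h\times[-1,1])\cup(V\times D^2)$ from $M$ to the mapping torus of the doubled monodromy. The construction of $W$ and the identification of its two boundary components are right, and the key observation — that $f$ kills the meridian circles because they bound disks, so $f$ can be pushed onto the binding — is exactly the point that makes the bordism work over $X$. One small imprecision: your step~2 only arranges that $f$ is pulled back from $\partial V$ on the solid neighbourhood $\partial V\times D^2$ of the binding, whereas the overlap check for $\widetilde f$ also uses the behaviour of $f|_V$ on a collar of $\partial V$ \emph{inside the page} $V\subseteq V_h$, which is not contained in $\partial V\times D^2$; ``choosing the collars small'' does not by itself make $(f|_V)\circ\mathrm{pr}_V$ agree with $(f|_{\partial V})\circ\mathrm{pr}_{\partial V}$ there. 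This is repaired by one more routine homotopy (make $f$ constant in the collar direction near $\partial V_h$ in $V_h$ as well, again via the homotopy extension property), or by interpolating between the two descriptions of $\widetilde f$ across the bicollar. With that fix, together with the usual orientation convention $DV=V\cup_{\partial V}\overline V$ so that $Dh$ is orientation-preserving, the proof is complete.
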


Note that every fiber bundle over $S^1$ is in particular an open book decomposition. On the other hand, every class in $\bSK_*(X)$ represented by a manifold with OBD is zero, hence also has a representative that fibers over the circle. So we may replace $F_n(X)$ in \cref{skbook1.2} by the subgroup of classes that have a representative with an open book decomposition.

The final theorem in this section is Theorem 1 from \cite{neumann1975}. This theorem will be generalized by our \cref{forget-Y}.

\begin{thm}
\label{forget-X}
    If $X$ is simply connected, then $\SK_n(X) \cong \SK_n$, where the isomorphism is given by $\varepsilon([M,f]) = [M]$.
\end{thm}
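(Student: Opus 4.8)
The map $\varepsilon$ is well defined because a cutting‑and‑pasting relation between pairs, or a null‑bordism of a pair $(M,f)$, restricts to the corresponding datum for $M$ alone; and it is surjective because path connectedness of $X$ lets us equip any $M$ with a constant map, with $\varepsilon([M,*])=[M]$. So the content is injectivity, and the plan is to reduce it in two steps, via \cref{skbook1.1} and \cref{skbook1.2}, to a statement about reduced bordism. First, $\varepsilon$ carries $I_n(X)$ into $I_n$; since both are generated by the class of $(S^n,*)$ and, by \cref{skbook1.1}, do not depend on $X$, the restriction $I_n(X)\to I_n$ is an isomorphism, so by the five lemma it suffices to show the induced map $\bar\varepsilon\colon\bSK_n(X)\to\bSK_n$ is an isomorphism. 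Surjectivity of $\bar\varepsilon$ is again immediate from constant maps.

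For injectivity of $\bar\varepsilon$ I would use the naturality of \cref{skbook1.2} together with the splitting $\Omega_n(X)\cong\Omega_n\oplus\widetilde\Omega_n(X)$ induced by the basepoint, under which the forgetful map $\Omega_n(X)\to\Omega_n$ is the projection onto the first factor. Forgetting the map to $X$ carries a manifold with an open book decomposition to one with an open book decomposition, so, using the remark following \cref{skbook1.2}, $F_n(X)$ maps into $F_n$; together with the constant‑map section this sandwiches $F_n\oplus 0\subseteq F_n(X)\subseteq F_n\oplus\widetilde\Omega_n(X)$. Thus the whole theorem reduces to the claim that for simply connected $X$ one has $\widetilde\Omega_n(X)\subseteq F_n(X)$, i.e. every reduced bordism class over $X$ is represented by a manifold admitting an open book decomposition. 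Granting this, $F_n(X)=F_n\oplus\widetilde\Omega_n(X)$, hence $\bSK_n(X)=\Omega_n(X)/F_n(X)\cong\Omega_n/F_n=\bSK_n$ compatibly with $\bar\varepsilon$, and we are done.

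The claim is the heart of the matter and is essentially Neumann's argument from \cite{neumann1975}. Given $[M,f]\in\widetilde\Omega_n(X)$, the underlying manifold $M$ is null‑bordant. Since $X$ is path connected, $0$‑surgeries on $M$ extend over a bordism to $X$, and since $X$ is simply connected every embedded circle in $M$ maps null‑homotopically, so $1$‑surgeries extend too; neither changes the class $[M,f]\in\Omega_n(X)$. Using that $M$ is oriented, so these circles have trivial normal bundle, I would surger $M$ to be connected and then kill a normally generating set of $\pi_1(M)$, arriving at a simply connected representative, still null‑bordant. A null‑bordant manifold has vanishing signature (and vanishing Kervaire invariant in the dimensions where that is the obstruction), so by the open‑book existence theory for high‑dimensional simply connected manifolds (due to Winkelnkemper) — there is no obstruction in odd dimensions, and in even dimensions the obstruction is an additive bordism invariant, equal to the signature when $\dim M\equiv 0\pmod 4$ — the surgered $M$ admits an open book decomposition once $\dim M$ is large enough, placing $[M,f]$ in $F_n(X)$; the finitely many small dimensions are handled by similar but more elementary arguments (for $n\le 3$ one surgers $M$ down to a sphere and uses its standard open book). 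The step I expect to be the real obstacle is exactly this: arranging the simplifying surgeries on $M$ to be compatible with $f$ — which is precisely where simple‑connectivity of $X$, as opposed to mere path connectedness, enters — and then invoking (or reproving) the existence of open book decompositions on simply connected null‑bordant manifolds. Everything before that is formal bookkeeping with the two exact sequences.
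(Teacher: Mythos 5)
The paper does not reprove this statement; it quotes it as Theorem~1 of \cite{neumann1975}, and your proposal is essentially a reconstruction of Neumann's original argument (which is also the template the paper adapts for its generalization \cref{forget-Y}). Your formal reduction is correct: \cref{skbook1.1} and the five lemma reduce the claim to $\bSK_n(X)\to\bSK_n$ being an isomorphism, and \cref{skbook1.2} together with the basepoint splitting of $\Omega_n(X)$ reduces that to $\widetilde{\Omega}_n(X)\subseteq F_n(X)$. The main step is also right for $n\geq 6$: surger $(M,f)$ to a connected, simply connected representative (the $1$-surgeries extend over the reference map precisely because $X$ is simply connected), observe that the signature vanishes since $M$ is null-bordant, and invoke Winkelnkemper's open book existence theorem \cite{winkelnkemper-obd} plus the remark following \cref{skbook1.2}.

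The genuine gap is dimension four (and, less seriously, five). Winkelnkemper's theorem requires $\dim M\geq 6$, and the existence of open book decompositions on simply connected signature-zero $4$-manifolds is not available; your remark that the ``finitely many small dimensions are handled by similar but more elementary arguments'' covers $n\leq 3$ but silently skips $n=4$, which is in fact the one case needing a genuinely different input rather than an easier version of the same one. Neumann treats it via the classification of simply connected $4$-manifolds up to stable diffeomorphism, and the paper's proof of the generalization \cref{forget-Y} correspondingly has to import Kreck's modified surgery \cite{kreck1999surgery} exactly for this dimension. For $n=5$ (and odd $n$ in general) you should either quote the vanishing of the odd-dimensional oriented $\SK$-groups from \cite{skbook}, as the paper does, or an odd-dimensional open book existence theorem, since Winkelnkemper's statement again does not literally apply. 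With the $n=4$ case supplied, your argument is complete and coincides with the cited proof.
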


\section{The unoriented case} \label{section-unori}
We now move to proving the claims made in \cref{results}. In the unoriented case, we make use of the fact that there is a K\"unneth isomorphism for the unoriented bordism groups.
\begin{prop}
\label{kunneth-O}
    The map \[\Omega_*^O(X)\otimes_{\Omega_*^O}\Omega_*^O(Y)\to \Omega_*^O(X\times Y)\]
    given by product of singular manifolds is an isomorphism.
\end{prop}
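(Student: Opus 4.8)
The plan is to recall that unoriented bordism is well-understood homologically: $\Omega_*^O$ is a graded polynomial ring over $\mathbb{F}_2$ (on generators in every degree not of the form $2^i-1$), and in particular every $\Omega_*^O$-module is free, since $\Omega_*^O$ is a product of graded fields — more precisely, $\Omega_*^O$ is a polynomial ring over $\mathbb{F}_2$, hence a graded-commutative $\mathbb{F}_2$-algebra all of whose modules arising as bordism of spaces are free because the relevant spectral sequence degenerates. Concretely, the Thom spectrum $MO$ splits as a wedge of suspensions of the mod $2$ Eilenberg--MacLane spectrum $H\mathbb{F}_2$. This is the crucial structural input, and I would state it explicitly and cite Thom (or Wall).

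From the splitting $MO \simeq \bigvee_\alpha \Sigma^{|\alpha|} H\mathbb{F}_2$, I would deduce that for any space $Z$ there is a natural isomorphism $\Omega_n^O(Z) \cong \bigoplus_\alpha H_{n-|\alpha|}(Z;\mathbb{F}_2)$, i.e. $\Omega_*^O(Z) \cong H_*(Z;\mathbb{F}_2)\otimes_{\mathbb{F}_2}\Omega_*^O$ as graded $\Omega_*^O$-modules. Applying this to $Z = X$, $Z = Y$, and $Z = X\times Y$, the claim reduces to the ordinary Künneth theorem for singular homology with field coefficients $\mathbb{F}_2$: $H_*(X\times Y;\mathbb{F}_2) \cong H_*(X;\mathbb{F}_2)\otimes_{\mathbb{F}_2} H_*(Y;\mathbb{F}_2)$, which holds with no flatness hypotheses since $\mathbb{F}_2$ is a field. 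Tensoring up over $\Omega_*^O$ and chasing the identifications shows the product map $\Omega_*^O(X)\otimes_{\Omega_*^O}\Omega_*^O(Y)\to\Omega_*^O(X\times Y)$ is an isomorphism. I would also need to check that the abstract isomorphism obtained this way is indeed the product-of-singular-manifolds map; this follows because the $MO$-module splitting is multiplicative enough that the external product on $MO$-homology corresponds to the homology cross product under the identification, but it is cleanest to argue that the product map is a map of $\Omega_*^O(X)$-modules which is an isomorphism after the base change, or simply to invoke that the external product $MO_*(X)\otimes_{MO_*}MO_*(Y)\to MO_*(X\wedge Y_+)$ is an iso for any ring spectrum that is a wedge of $H\mathbb{F}_2$'s.

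Alternatively, and perhaps more in the spirit of a bordism paper, I would give the argument via the Atiyah--Hirzebruch spectral sequence: for $MO$-homology the AHSS collapses at $E_2$ (because $MO_*$ is generated over $\mathbb{F}_2$ and all $k$-invariants of $MO$ vanish), so $\Omega_*^O(Z)$ has an associated graded equal to $H_*(Z;\Omega_*^O)$, and one compares the multiplicative AHSS for $X$, $Y$, $X\times Y$. Either way the only real content is the collapse of the spectral sequence / the splitting of $MO$, which is classical.

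The main obstacle is not the homological algebra — with $\mathbb{F}_2$ coefficients Künneth is automatic — but making sure the naturally constructed map (product of singular manifolds) is identified with the algebraic isomorphism; I expect to handle this by noting the product map is $\Omega_*^O$-bilinear and compatible with the Thom-spectrum product, so it agrees with the external smash product $MO \wedge MO \to MO$ on homology, which is an isomorphism after the indicated base change precisely because $MO$ is a wedge of Eilenberg--MacLane spectra. I would keep this step brief, citing \cite{skbook} or standard references (e.g. Conner--Floyd, or Rudyak's book on Thom spectra) for the fact that $MO$ is a generalized Eilenberg--MacLane spectrum and that this forces the Künneth isomorphism in $MO$-homology.
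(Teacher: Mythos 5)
Your proposal is correct and takes essentially the same route as the paper: the paper also deduces the result from the isomorphism $\Omega_*^O(Z)\cong H_*(Z;\Z/2)\otimes_{\Z/2}\Omega_*^O$ (citing Conner--Floyd) together with the ordinary K\"unneth formula over the field $\Z/2$. Your extra care about identifying the abstract isomorphism with the product-of-singular-manifolds map, and the alternative AHSS phrasing, are reasonable elaborations of the same argument; only your opening claim that ``every $\Omega_*^O$-module is free'' is overstated as literally written, but nothing in the proof depends on it.
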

\begin{proof}
    This can be shown using the fact that there is an isomorphism $\Omega_*^O(X)\cong H_*(X,\Z/2)\otimes_{\Z/2}\Omega_*^O$ (see for example \cite{conner1962differentiable}) and the usual K\"unneth formula.
\end{proof}

The claim made in \cref{komiya-unori} now follows by applying the following proposition to $Y=BO(m-n)$. Indeed $\SK_*^O(BO(k))\to\SK_*^O$ is an isomorphism. This is because generators for the homology of $BO(k)$ come from maps $\RP^i\to BO$ that classify the tautological line bundle over $\RP^i$. However, in $\SK_*^O(X)$ it is true that $[\RP^i,f]=[\RP^i,*]$ for all maps $f:\RP^i\to X$. For more details, see \cite[Thm 2.11]{skbook}.

\begin{prop}
\label{remove-BO}
    Let $Y$ be a pointed path connected space such that the forgetful map $\SK_*^O(Y)\to\SK_*^O$ is an isomorphism. Then $p_*:\SK_*^O(X\times Y)\to\SK_*^O(X)$ induced by the projection $p:X\times Y\to X$ is an isomorphism.
\end{prop}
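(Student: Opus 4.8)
The plan is to reduce everything to statements about bordism groups using the two structural theorems for $\SK$, namely \cref{skbook1.1} and \cref{skbook1.2}. By \cref{skbook1.1} the group $I_n$ generated by $[S^n,*]$ is a direct summand of $\SK_n^O(-)$ that does not depend on the reference space, so $p_*$ is automatically an isomorphism on that summand; it therefore suffices to show $p_*:\bSK_n^O(X\times Y)\to \bSK_n^O(X)$ is an isomorphism. By \cref{skbook1.2} we have $\bSK_n^O(-) = \Omega_n^O(-)/F_n(-)$, so the claim will follow if I show that $p_*:\Omega_n^O(X\times Y)\to\Omega_n^O(X)$ is surjective with kernel exactly $\ker(p_*)\cap F_n(X\times Y)$ mapping onto $F_n(X)$ — more precisely, that $p_*$ induces both a surjection on the $\Omega$'s and an isomorphism on the $F$'s, or at least a surjection $F_n(X\times Y)\to F_n(X)$.

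First I would set up the splitting of $p_*$ at the bordism level. The inclusion of the basepoint $y_0\in Y$ gives a section $s:X\to X\times Y$ of $p$, inducing $s_*:\Omega_n^O(X)\to\Omega_n^O(X\times Y)$ with $p_*s_* = \id$, and likewise on $\SK^O$. So $p_*$ is always a split surjection; the content is injectivity. Using the Künneth isomorphism \cref{kunneth-O}, $\Omega_*^O(X\times Y)\cong \Omega_*^O(X)\otimes_{\Omega_*^O}\Omega_*^O(Y)$, and the section $s_*$ corresponds to $\id\otimes[\mathrm{pt}]$. Now the hypothesis that $\SK_*^O(Y)\to\SK_*^O$ is an isomorphism, combined again with \cref{skbook1.1,skbook1.2}, tells me that $\bSK_*^O(Y)\to\bSK_*^O$ is an isomorphism, i.e. $\Omega_n^O(Y)/F_n(Y)\to \Omega_n^O/F_n$ is an isomorphism for all $n$; equivalently, every class in $\Omega_n^O(Y)$ is, modulo $F_n(Y)$, a multiple of $[\mathrm{pt}]$ pushed forward from a point (using path-connectedness of $Y$ to identify the image of $\Omega_*^O\to\Omega_*^O(Y)$). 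So in the quotient $\bSK$-picture, $\Omega_*^O(Y)$ contributes nothing beyond $\Omega_*^O$ itself.

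The key step is then: take $\alpha\in\SK_n^O(X\times Y)$ with $p_*\alpha = 0$ and show $\alpha = 0$. Reduce to $\bSK$, lift $\alpha$ to $\Omega_n^O(X\times Y)$, and write it via Künneth as a sum $\sum x_i\otimes w_i$ with $x_i\in\Omega_{n_i}^O(X)$, $w_i\in\Omega_{n-n_i}^O(Y)$. Using that $\Omega_*^O(Y)\to\bSK_*^O(Y)\cong \Omega_*^O/F_*$ kills the reduced part, I can, after subtracting an element of $F_n(X\times Y)$ (which dies in $\bSK$, and whose image under $p_*$ lies in $F_n(X)$ hence also dies — this needs the observation that $p$ carries a fibering-over-$S^1$ representative to one, which is immediate since $M\times_? \to S^1$ composed appropriately still fibers, or rather $p$ doesn't change $M$), replace each $w_i$ by a multiple of $[\mathrm{pt}]$; this rewrites $\alpha$ as $s_*\beta$ in $\bSK$ for some $\beta\in\bSK_n^O(X)$. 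Then $0 = p_*\alpha = \beta$ in $\bSK_n^O(X)$, so $\alpha = s_*\beta = 0$ in $\bSK_n^O(X\times Y)$, and lifting back through the split sequence \cref{skbook1.1} gives $\alpha = 0$ in $\SK_n^O(X\times Y)$.

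The main obstacle I expect is the bookkeeping around the subgroup $F$: I must check that the Künneth-based rewriting can be performed \emph{compatibly} with the quotient by $F_*$, i.e. that replacing $w_i$ by its $\bSK$-class and then lifting stays inside a single well-defined element of $\bSK_n^O(X\times Y)$, and that $p_*$ maps $F_n(X\times Y)$ into $F_n(X)$. The latter is easy — if $M$ fibers over $S^1$, then $p\colon (M,f)\mapsto(M,p\circ f)$ leaves $M$ untouched, so the image still fibers over $S^1$ — but the former requires that the Künneth decomposition, the section $s_*$, and the quotient map all fit into a commutative diagram of short exact sequences; assembling that diagram carefully (and invoking that $\Omega_*^O$ is a graded $\Z/2$-vector-space-like ring, so the tensor products behave) is the technical heart. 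Once the diagram commutes, a short diagram chase finishes the proof.
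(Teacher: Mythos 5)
Your proposal is correct and follows essentially the same route as the paper: reduce to $\bSK^O_*$ via the split sequence of \cref{skbook1.1} and the five lemma, decompose classes by the K\"unneth isomorphism of \cref{kunneth-O}, and use the hypothesis on $Y$ (together with the fact that a product with a manifold fibering over $S^1$ again fibers over $S^1$) to show every class agrees in $\bSK^O_*(X\times Y)$ with its image under the basepoint section, whence $p_*$ is injective. The paper phrases this as $[M,f]-[M,pf]=0$ in $\bSK^O_*(X\times Y)$ rather than as rewriting $\alpha=s_*\beta$, but these are the same argument.
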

\begin{proof}
    This map $p_*$ is trivially surjective. Let $M$ be a manifold, $f:M\to X\times Y$ a map and consider $[M,f]-[M,pf]\in\Omega_*^O(X\times Y)$, where we use the basepoint of $Y$ to embed $\Omega_*^O(X)$ into $\Omega_*^O(X\times Y)$. By \cref{kunneth-O}, we can write $[M,f]$ as a sum of elements of the form $[M_1\times M_2,f_1\times f_2]$, with $f_1:M_1\to X$, $f_2:M_2\to Y$. Then clearly $[M,pf]$ can be written as the sum of the elements $[M_1\times M_2,f_1\times *]$. We have that $[M,f]-[M,pf]$ is a sum of the elements
    \[[M_1\times M_2,f_1\times f_2] - [M_1\times M_2,f_1\times *] = [M_1,f_1]([M_2,f_2]-[M_2,*]).\]
    But recall that $[M_2,f_2]-[M_2,*]$ is zero in SK because $\SK_*^O(Y)\cong \SK_*^O$. So $[M,f]-[M,pf]$ is zero in $\bSK_*^O(X\times Y)$, and thus $p_*:\bSK_*^O(X\times Y)\to\bSK_*^O(X)$ is injective since the map $\bSK_*^O(X)\to\bSK_*^O(X\times Y)$ given by including $X$ into $X\times Y$ using the basepoint is a left inverse. Therefore it is an isomorphism. A simple application of the five lemma now shows that $p_*:\SK_*^O(X\times Y)\to \SK_*^O(X)$ is an isomorphism, as claimed.
\end{proof}

\begin{rmk}
    To the author's knowledge, there is no example in the literature that shows that unoriented SK of maps is nontrivial, i.e. a space $X$ such that $\SK^O_*(X)\not\cong\SK^O_*$. Neumann \cite[Theorem 2]{neumann1975} gives a way to compute $\SK_2(X)$ using the fundamental group of $X$ and its abelian subgroups. This is then applied with $X$ an oriented surface of genus $\geq 2$ to show that $\SK_*(X)\not\cong\SK_*$. His methods also work in the unoriented case, replacing $\Z$ coefficients with $\Z/2$ ones, with the caveat that there is now two surfaces that fiber over the circle instead of just one: apart from the torus we also have to consider the Klein bottle $K$. We therefore also need to consider subgroups of $\pi_1(X)$ that are images of $\pi_1(K)$, i.e. the free group on two generators modulo the relation $ab = b^{-1}a$. Fortunately, $\pi_1(X)$ admits no noncyclic such subgroups. Therefore $\SK_*^O(X)$ is also nontrivial.
\end{rmk}

\section{The oriented case} \label{section-ori}

The situation in the oriented case is more complicated, and we shall require powerful theory. However, we will eventually be able to prove

\begin{thm}
\label{forget-Y}
    Let $Y$ be a simply connected pointed space, and suppose $\pi_1(X)$ is finitely presented. Then the projection $p_*:\SK_n(X\times Y)\to\SK_n(X)$ is an isomorphism for all $n$.
\end{thm}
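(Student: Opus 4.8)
The plan is to mimic the strategy of \cref{remove-BO}, but the obstacle is immediate: in the oriented category we do not have a K\"unneth isomorphism for bordism (because $\Omega_*^{SO}$ has torsion and is not a graded field). So instead of writing an arbitrary singular manifold in $X\times Y$ as a sum of products, I would work one class at a time and use the $\SK$-specific structure theory (\cref{skbook1.1}, \cref{skbook1.2}, and the open book / fibering-over-$S^1$ characterization of the subgroup $F_n$), together with the hypothesis that $Y$ is simply connected and $\pi_1(X)$ is finitely presented.

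First I would reduce, via \cref{skbook1.1}, to showing $p_*:\bSK_n(X\times Y)\to\bSK_n(X)$ is an isomorphism; surjectivity is clear from the basepoint section $X\hookrightarrow X\times Y$, so the content is injectivity. Using \cref{skbook1.2}, this amounts to: if $(M,f)$ with $f\colon M\to X\times Y$ is such that $(M,p f)$ fibers over $S^1$ (after a bordism in $X$), then $(M,f)$ itself is bordant in $X\times Y$ to something fibering over $S^1$ — equivalently, having an open book decomposition. The key geometric point is that if $M$ fibers over $S^1$ with fiber $F$ and monodromy $h$, then the map $f\colon M\to X\times Y$ has a $Y$-component $f_Y\colon M\to Y$; since $Y$ is simply connected and $M$ is built from $F\times[0,1]$ by gluing via $h$, obstruction theory lets me homotope $f_Y$ so that it is compatible with the fibration structure — more precisely, I would try to make $f_Y$ factor (up to homotopy and a controlled bordism) through the fiber $F$, so that the fibered-over-$S^1$ structure of $M$ is preserved while killing the $Y$-coordinate's dependence on the $S^1$-direction. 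This is where I expect to really use both that $Y$ is simply connected (so that there is no monodromy obstruction on $\pi_0$/$\pi_1$ level to spreading $f_Y$ over the mapping torus) and that $\pi_1(X)$ is finitely presented (to keep the bordism groups in play finitely generated, or to invoke Neumann's computational machinery referenced in the remark after \cref{remove-BO}).

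Concretely, I would structure the argument as follows. Step 1: reduce to $\bSK$ and then to the statement about $F_n$ using \cref{skbook1.1} and \cref{skbook1.2}. Step 2: given $[M,f]\in\ker p_*$ on $\bSK$, produce a bordism over $X\times Y$ from $(M,f)$ to $(M,f')$ where $f'=(f'_X,f'_Y)$ with $f'_Y$ null-homotopic on each fiber of a chosen $S^1$-fibration of $M$; use $\pi_1(Y)=0$ to start an obstruction-theoretic deformation of $f_Y$, and absorb the higher obstructions by a bordism (the obstructions live in $H^{*}(M;\pi_{*-1}Y)$ and can be realized geometrically after crossing with appropriate manifolds, exactly as in the $\SK$-bordism interplay). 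Step 3: once $f'_Y$ is constant along the $S^1$-direction, the whole map $f'$ respects the open book structure, so $[M,f']=0$ in $\bSK_n(X\times Y)$; combined with Step 2 this gives $[M,f]=0$. Step 4: five-lemma / splitting argument to pass back from $\bSK$ to $\SK$, noting $I_n$ does not depend on the target (\cref{skbook1.1}), which finishes the proof. The main obstacle is Step 2: making the obstruction-theory argument rigorous while staying inside the realm of bordism classes fibering over $S^1$ — this is presumably where the finite presentation of $\pi_1(X)$ and the simple connectivity of $Y$ are genuinely needed, and where I would expect to have to invoke Neumann's $\SK_2$-computation or a spectral-sequence argument for $\Omega_*^{SO}(X\times Y)$ rather than a naive K\"unneth splitting.
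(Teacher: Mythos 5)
Your reduction to injectivity of $p_*$ on $\bSK$ and the concluding five-lemma step match the paper, but Step~2 contains the essential gap, and it is aimed at the wrong difficulty. If $M$ itself fibers over $S^1$ (or carries an open book decomposition), there is nothing to arrange about $f_Y$: by \cite[6.4]{skbook} \emph{any} map out of an open-book manifold is zero in $\bSK$, so $[M,f]=0$ in $\bSK_n(X\times Y)$ regardless of how the $Y$-component interacts with the fibration. The real problem is that $[M,pf]=0$ in $\bSK_n(X)$ only provides a bordism \emph{over $X$} from $(M,pf)$ to some $(N,h)$ with $N$ fibered over $S^1$; that bordism $W$ carries no map to $Y$, and extending $f_Y\colon M\to Y$ over $W$ meets obstructions in $H^{k+1}(W,M;\pi_k Y)$ that need not vanish for a general simply connected $Y$ and cannot simply be ``absorbed by a bordism.'' If such a naive obstruction-theoretic transport worked, \cref{forget-X} would already follow from elementary considerations, whereas Neumann needed Winkelnkemper's open book theorem to prove it; your proposal inherits exactly that missing step.

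The paper closes this gap by a different mechanism. In high dimensions ($\geq 5$) it uses Ranicki's asymmetric signature, a \emph{complete} obstruction to extending open book structures, valued in $\operatorname{LAsy}^{n+2}_h(\Z[\pi_1(-)])$, which depends only on the fundamental group of the target and therefore cannot see the simply connected factor $Y$: vanishing of $\sigma^*(M,f)$ (which follows from $[M,f]=0$ in $\bSK(X)$) forces vanishing of $\sigma^*(M,f\times g)$ and hence $[M,f\times g]=0$ in $\bSK(X\times Y)$. In dimension $4$, where no open book theorem is available, the paper runs an entirely separate argument via Kreck's modified surgery: it normalizes $f$ and $g$ to induce $\pi_1$-isomorphisms, arranges the manifolds to be totally nonspin, and uses bordant normal $1$-smoothings to produce a stable diffeomorphism onto an open-book representative. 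Your proposal supplies neither the complete $\pi_1$-only obstruction needed in high dimensions nor any treatment of the dimension-$4$ case, so as written it does not establish the theorem.
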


The proof given here is inspired by Neumann's proof of \cref{forget-X}. The low-dimensional cases can be easily handled here. In odd dimensions all the $\SK$-groups vanish, so there is nothing to prove. In dimension 2, Neumann \cite[Thm 2]{neumann1975} proves that the group $\SK_2(X)$ depends only on $\pi_1(X)$, and thus $\SK_2(X\times Y)\to\SK_2(X)$ must be an isomorphism. The case $n \geq 6$ is done by studying the asymmetric signature of Ranicki \cite{ranicki2013high} and concluding that this does not depend on the map to $Y$. This generalizes Neumann's argument which relied on the Open Book Decomposition Theorem of Winkelnkemper \cite{winkelnkemper-obd}. Finally, in the case $n=4$ Neumann uses the classification of simply connected 4-manifolds up to stable diffeomorphism. We need to consider non-simply connected manifolds and use the theory due to Kreck \cite{kreck1999surgery}.

\begin{proof}
    of the high-dimensional case $n\geq 5$. We need to consider the asymmetric signature, which is an invariant of manifolds that have an open book decomposition on the boundary. Let us consider the bounded book bordism groups $BB_n(X)$ of Ranicki. The group $BB_n(X)$ consists of $(n+2)$-manifolds with boundary that have a book decomposition on the boundary together with a map to $X$, modulo bordisms of manifolds with boundary that extend the book decomposition on the boundary and the map to $X$. In particular, this contains all the closed singular manifolds $[M,f]$. The class of a manifold with boundary is zero when the book decomposition on the boundary extends to the entire manifold. 

    Ranicki now defines the asymmetric signature by constructing some asymmetric Poincar\'e complex of $\Z[\pi_1(M)]$-modules from the $(n+2)$-manifold with boundary $M$ and taking its cobordism class in the asymmetric $L$-group $\operatorname{LAsy}_h^{n+2}(\Z[\pi_1(M)])$. We extend this to an invariant of $BB_n(X)$ by pulling back the coefficients along the map $f:M\to X$ to get a Poincar\'e complex of $\Z[\pi_1(X)]$-modules, so that we get a map $\sigma*: BB_n(X) \to \operatorname{LAsy}_h^{n+2}(\Z[\pi_1(X)])$. This is a bounded book bordism invariant \cite[Prop 29.16]{ranicki2013high}, so if the book decomposition on the boundary extends, then $\sigma^*(M,f) = 0$. The crucial point is that the asymmetric signature is the only obstruction, that is, when $\sigma^*(M,f) = 0$, the decomposition on the boundary extends [loc. cit., Thm 29.17], when $n\geq 3$, that is, the dimension of $M$ is at least 5.

    Consider $[M,f]$ that is zero in $\bSK_{n+2}(X)$ and let $g:M\to Y$ be a map, recalling that $Y$ is simply connected. The group $\operatorname{LAsy}_h^{n+2}(\Z[\pi])$ depends only on $\pi$ so $\operatorname{LAsy}_h^{n+2}(\Z[\pi_1(X\times Y])\cong \operatorname{LAsy}_h^{n+2}(\Z[\pi_1(X)])$. Now computing the asymmetric signature $\sigma^*(M,f\times g)$ does not depend on the map $g$, because pulling back the coefficients to $\Z(\pi_1(X\times Y)$ along $f\times g$ is the same thing as pulling back the coefficients to $\Z(\pi_1(X))$ along $f$. Therefore we have that $\sigma^*(M,f\times g)\in \operatorname{LAsy}_h^{n+2}(\Z[\pi_1(X)])$ equals $\sigma^*(M,f) = 0$ because $[M,f]$ is bordant to a manifold that has an open book decomposition. Hence the map $p_*:\bSK_n(X\times Y)\to\bSK_n(X)$ is injective. It's trivially surjective and we use the five lemma to conclude.
\end{proof}

To prove the claim for $n=4$, we need to introduce the stable diffeomorphism classification that follows from Kreck modified surgery. Let $M$ be an oriented manifold (in our case, it will always be a 4-manifold). A \textit{normal 1-type} of $M$ is a 2-coconnected fibration $B\to BSO$ such that there exists a map $M\to B$ lifting the classifying map for the normal bundle of $M$ that is 2-connected. Such a lift is called a \textit{normal 1-smoothing}. It can be shown that every $M$ has a normal $1$-type and that it is unique up to vertical homotopy equivalence. We will use the following result, which tells us that asking whether two 4-manifolds are stably diffeomorphic is a bordism problem.

\begin{thm}
    Two closed 4-dimensional manifolds with the same normal 1-type \( \xi: B \to BSO \), admitting bordant normal 1-smoothings, are stably diffeomorphic.
\end{thm}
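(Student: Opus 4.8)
The plan is to reduce the stable diffeomorphism question to an equality of bordism classes in the bordism group over the common normal $1$-type, and then invoke Kreck's modified surgery. First I would recall that by assumption the two $4$-manifolds $M_1, M_2$ share a normal $1$-type $\xi: B \to BSO$, i.e. each admits a normal $1$-smoothing $\bar\nu_i: M_i \to B$ lifting the (stable) normal bundle classifying map $\nu_i: M_i \to BSO$ and such that $\bar\nu_i$ is $2$-connected. The hypothesis ``admitting bordant normal $1$-smoothings'' means that $[M_1, \bar\nu_1] = [M_2, \bar\nu_2]$ in the bordism group $\Omega_4(\xi) := \Omega_4^{\xi}(B)$, i.e. there is a compact $5$-manifold $W$ with $\partial W = M_1 \sqcup -M_2$ together with a normal $\xi$-structure $\bar\nu_W: W \to B$ restricting to $\bar\nu_i$ on the two ends.

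The next step is the heart of the argument: feed $(W, \bar\nu_W)$ into Kreck's modified surgery machinery to improve $W$ to an $h$-cobordism (or $s$-cobordism) after stabilization. Concretely, one does surgery on the interior of $W$ below the middle dimension to make $\bar\nu_W: W \to B$ itself a $2$-connected map, rel boundary; this is possible because $B \to BSO$ is $2$-coconnected (so the obstruction to pushing connectivity up to the middle range vanishes) and because the boundary pieces are already normally $1$-smoothed. Since $\dim W = 5$ is odd, after making $W$ highly connected relative to $B$ one is left with a surgery obstruction in the relevant monoid/$L$-group situation in the middle dimension; Kreck's theorem says that after taking connected sum with copies of $S^2 \times S^2$ on the interior (equivalently, after stabilizing $M_1$ and $M_2$ by $S^2 \times S^2$'s), this obstruction can be killed and $W$ becomes an $s$-cobordism. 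I would cite Kreck's ``Surgery and duality'' (already referenced as \cite{kreck1999surgery} in the excerpt) for the precise statement in dimension $5$ that a bordism between normal $1$-smoothings is, after stabilization, an $s$-cobordism.

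Finally, once $W$ (stabilized) is an $s$-cobordism between $M_1 \# r(S^2\times S^2)$ and $M_2 \# r(S^2 \times S^2)$, its being a product is not guaranteed in dimension $5$ by the $s$-cobordism theorem directly (that needs dimension $\geq 6$), but one more stabilization by $S^2\times S^2$ or by $\C P^2$'s, together with the fact that we only want a diffeomorphism after connected sum with copies of $S^2 \times S^2$ (the \emph{stable} diffeomorphism relation), lets us conclude: an $s$-cobordism in dimension $5$ becomes a product after crossing with enough handles, yielding a diffeomorphism $M_1 \# r'(S^2\times S^2) \cong M_2 \# r'(S^2\times S^2)$. Hence $M_1$ and $M_2$ are stably diffeomorphic.

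I expect the main obstacle to be making the middle-dimensional surgery step rigorous: in dimension $5$ one is doing surgery on embedded circles and then on embedded $2$-spheres in a $5$-manifold, and controlling the algebraic surgery obstruction requires care with the quadratic form over $\Z[\pi_1(B)]$ and with the role of stabilization by $S^2 \times S^2$ in trivializing it. Rather than redo this, the cleanest route for the paper is to quote the relevant theorem of Kreck verbatim: for $4$-manifolds, $M_1$ and $M_2$ are stably diffeomorphic if and only if they have the same normal $1$-type and their normal $1$-smoothings are bordant in $\Omega_4(\xi)$. The direction we need is exactly the ``if'' part, so the proof amounts to verifying the hypotheses (common normal $1$-type, bordant normal $1$-smoothings) are precisely what is assumed, and then invoking \cite{kreck1999surgery}.
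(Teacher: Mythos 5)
Your closing paragraph matches the paper exactly: no independent proof is given, the statement is simply cited as Theorem C of \cite{kreck1999surgery}, so ``verify the hypotheses and quote Kreck'' is the intended argument. One caveat you miss, and which the paper explicitly flags: Kreck's Theorem C assumes the two manifolds have equal Euler characteristic, precisely so that the diffeomorphism can be arranged with \emph{equally many} $S^2\times S^2$ summands on each side (as in your $M_1\# r(S^2\times S^2)\cong M_2\# r(S^2\times S^2)$ with the same $r$); the paper drops this hypothesis because its notion of stable diffeomorphism permits $M_1\#_a(S^2\times S^2)\cong M_2\#_b(S^2\times S^2)$ with $a\neq b$, which is what the modified surgery argument naturally produces. (Your description of the interior surgery as yielding an $s$-cobordism that is then ``made a product by crossing with handles'' is also not quite Kreck's route --- he trades handles so that $W$ is built from $M_1\times I$ using $2$- and $3$-handles only and reads the stable diffeomorphism off the middle level --- but since you end by quoting the theorem rather than reproving it, this does not affect the correctness of your proposal.)
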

This is essentially Theorem C in \cite{kreck1999surgery}. Note that we do not require the number of connected sums with $S^2\times S^2$ on both sides to be equal, which is why we can drop the assumption on the Euler characteristic that is made there.

We need the following lemma that tells us what the normal 1-type of a manifold is. A proof can be found in \cite[Section 3]{KLPT-stable4mnflds}. A manifold $M$ is called \textit{totally nonspin} if its universal cover $\widetilde{M}$ is not spin.

\begin{lem}
\label{totally-nonspin}
    Let \( \pi \) be a finitely presented group. The normal 1-type of a totally 
    nonspin manifold with fundamental group \( \pi \) is given by  
        \[\xi: B\pi \times BSO \xrightarrow{\operatorname{pr}_2} BSO,\]
    where the map is given by the projection onto \( BSO \). A 1-smoothing is any choice of $c:M\to B\pi$ inducing an isomorphism of fundamental groups together with the obvious map $M\to BSO$.
\end{lem}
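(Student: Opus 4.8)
The plan is to verify that the fibration $\xi\colon B\pi\times BSO\xrightarrow{\operatorname{pr}_2}BSO$, together with the proposed $1$-smoothing, satisfies the two defining properties of a normal $1$-type, and then to appeal to the uniqueness of the normal $1$-type up to vertical homotopy equivalence (recalled above). First I would check that $\operatorname{pr}_2$ is $2$-coconnected: it is a fibration, and its homotopy fibre is $B\pi$, which is aspherical because $\pi$ is a discrete group, so $\pi_i(B\pi)=0$ for $i\geq 2$. Running the long exact sequence of the fibration then shows that $\pi_i(B\pi\times BSO)\to\pi_i(BSO)$ is an isomorphism for $i\geq 3$ and a monomorphism for $i=2$, which is what $2$-coconnectedness demands.

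Next I would construct a $2$-connected lift. Since $B\pi$ is a $K(\pi,1)$, there is a map $c\colon M\to B\pi$ inducing an isomorphism $\pi_1(M)\xrightarrow{\cong}\pi$, and there are no obstructions to realising such a map because $B\pi$ is aspherical. Put $\bar\nu:=(c,\nu_M)\colon M\to B\pi\times BSO$, where $\nu_M\colon M\to BSO$ is the stable normal Gauss map; then $\operatorname{pr}_2\circ\bar\nu=\nu_M$, so $\bar\nu$ is a lift. To see it is $2$-connected — that is, an isomorphism on $\pi_0$ and $\pi_1$ and an epimorphism on $\pi_2$ — note that the statements on $\pi_0$ and $\pi_1$ are immediate ($M$ and $B\pi\times BSO$ are connected, and on $\pi_1$ the map is $c_*$ because $\pi_1(BSO)=0$), while on $\pi_2$ we have $\pi_2(B\pi\times BSO)=\pi_2(BSO)\cong\Z/2$ and the map in question is exactly $(\nu_M)_*\colon\pi_2(M)\to\pi_2(BSO)$.

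The heart of the argument is to identify surjectivity of this last map with the totally nonspin condition. The group $\pi_2(BSO)\cong\Z/2$ is detected by $w_2$, so for $\alpha\in\pi_2(M)$ represented by $f\colon S^2\to M$ one has $(\nu_M)_*\alpha\neq 0$ iff $\langle w_2(\nu_M),f_*[S^2]\rangle\neq 0$; and since $M$ is oriented, $w_1(M)=0$ forces $w_2(\nu_M)=w_2(M)$. Pulling back along the universal covering $q\colon\widetilde{M}\to M$, a local diffeomorphism, gives $w_2(\widetilde{M})=q^{*}w_2(M)$. By the Hurewicz theorem the natural map $\pi_2(M)\cong\pi_2(\widetilde{M})\to H_2(\widetilde{M};\Z)$ is an isomorphism, and by the universal coefficient theorem (using $H_1(\widetilde{M};\Z)=0$) the Kronecker pairing identifies $H^{2}(\widetilde{M};\Z/2)$ with $\operatorname{Hom}(H_2(\widetilde{M};\Z),\Z/2)$. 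Combining these, $(\nu_M)_*$ is surjective precisely when $w_2(\widetilde{M})$ pairs nontrivially with some class in $H_2(\widetilde{M};\Z)$, i.e. precisely when $w_2(\widetilde{M})\neq 0$, i.e. precisely when $\widetilde{M}$ is not spin. Thus $\bar\nu$ is $2$-connected exactly under the standing hypothesis, so $\xi$ is the normal $1$-type of $M$ and $\bar\nu$ is a normal $1$-smoothing; any other choice of $c$ inducing an isomorphism on $\pi_1$ works in the same way, and conversely any $2$-connected lift is of this form.

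I do not expect a real obstacle here: the argument is essentially bookkeeping with low-degree homotopy and homology groups. The only points needing care are the $w_2$-detection of $\pi_2(BSO)$ and the mild homological input $H_1(\widetilde{M};\Z)=0$, used to see that $w_2(\widetilde{M})$ vanishes iff it annihilates every homology class, hence iff it annihilates every spherical one. The finite presentability of $\pi$ plays no role in this identification; it is needed only downstream, so that $B\pi$ admits a model with finite $2$-skeleton and Kreck's stable diffeomorphism theorem applies.
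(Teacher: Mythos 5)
Your proof is correct and is essentially the standard argument: the paper does not prove this lemma itself but defers to the cited reference (Section~3 of the paper on stable classification of $4$-manifolds), where the same computation appears --- checking $2$-coconnectedness of $\operatorname{pr}_2$ via asphericity of $B\pi$, and identifying surjectivity of $(\nu_M)_*$ on $\pi_2$ with $w_2(\widetilde{M})\neq 0$ through the Hurewicz isomorphism for $\widetilde{M}$ and the universal coefficient theorem. Your closing remark that finite presentability of $\pi$ is irrelevant to this identification and only matters for the downstream surgery arguments is also accurate.
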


The way we will apply this is to start with $f:M\to X$ and use $f$ as part of the 1-smoothing of $M$. Consider the following construction. Start with a bordism class $[M,f]\in \Omega_4(X)$. If $[\alpha]\in\pi_1(X)$ is a class outside the image of $f_*$, then we have a map $\alpha\circ p_1: S^1\times S^3 \to X$ whose induced map on fundamental groups does hit $[\alpha]$. Since $\pi_(X)$ is finitely presented, we can take connected sums with finitely many $[S^1\times S^3,\alpha\circ p_1]$ to get a $[M',f']$ with $f''_*$ surjective. Then we can apply surgery below the middle dimension to kill classes in $\pi_1(M')$ that are sent to zero. We again only need to do this finitely many times, so the trace of the surgeries will give a bordism to $[M'',f'']$ such that $f''_*$ is an isomorphism.

Using this, we will now prove our theorem for 4-manifolds. Let $[M,f\times h]$ be in $\SK_4(X\times Y)$ such that $[M,f] = 0$ in $\SK_4(X)$. Then we know that $[M,f]$ is bordant in $X$ to $[M',f']$ with $f'_*:\pi_1(M')\to\pi_1(X)$ an isomorphism. We also know that $[M,f]$ is bordant in $X$ to $[N,g]$, where $N$ has an open book decomposition, since $[M,f]=0\in\bSK_4(X)$. Now, we can again apply the previous lemma to get $[N',g']$ bordant to $[N,g]$ with $g'_*$ an isomorphism. Now note that the twisted $S^2$-bundle over $S^2$, denoted here by $T$, is a simply connected totally nonspin 4-manifold that is zero in $\bSK_4$. Indeed we have that $T$ fibers over $S^2$, so $[T] = [S^2][S^2] = 0$ there by \cite[Lemma 1.5]{skbook}. Now we can assume that $N'$ and $M'$ are totally nonspin by taking a connected sum with $T$ if necessary.

Because of this, we have that $f'$ gives a choice of normal 1-smoothing on $M'$ by composing with the obvious map $X\to K(\pi_1(X),1)$, and similarly for $h'$ on $N'$. Furthermore, these normal 1-smoothings are now bordant, by construction. Therefore $M'$ and $N'$ are stably diffeomorphic. Let $\varphi M'\#_a S \to N'\#_b S$ be the diffeomorphism, $a,b\geq 0$, where $S:=S^2\times S^2$. We have that the diffeomorphism commutes with the normal 1-smoothings on $M'\#_a S$ and $N'\#_b S$ (cf. \cite{kreck1999surgery}, Cor 4.3), which are exactly given by combining $f'$ and $g'$ with the constant map on $S$ and composing with $X\to K(\pi_1(X),1)$ . We conclude that in $\bSK_4(X\times Y)$
\begin{align*}
    [M,f\times h] &= [M',f'\times h'] = [M'\#_a S, (f'\times h')\#_a *] = [N'\#_b S, (f'\times h')\#_a *\circ \varphi^{-1}] \\
    &= [N', g''\times k] + [S,q_1] + \dots + [S,q_b],
\end{align*}
where $g'':N'\to X$, $k:N'\to Y$ and the $q_i:S\to X$ are some suitable maps. The classes $[S,q_i]$ are all zero in $\bSK$ since $S$ has an open book decomposition. Note that $g''$ agrees with $g'$ when composed with $X\to K(\pi_1(X),1)$ by construction of $\phi$. This means reversing the surgeries from $N$ to $N'$ is not obstructed by the map $g''$, because the only obstructions live in the fundamental group. That is, the class $[N',g''\times k] = [N,g'''\times k']$ in $\bSK_4(X\times Y)$, and this is zero because $N$ has an OBD. We conclude that $[M,f\times h]$ actually was zero. The proof can be completed with the same five lemma argument as before.

\bibliography{biblio}
\bibliographystyle{amsalpha}
\end{document}